\newcommand{\mz}{\ensuremath{\mathbb Z}}
\newcommand{\mr}{\ensuremath{\mathbb R}}
\newcommand{\shortmod}{\ensuremath{\negthickspace \negthickspace \negthickspace \pmod}}
\newcommand{\half}{\ensuremath{ \frac{1}{2}}}
\newcommand{\intR}{\int_{-\infty}^{\infty}}
\newcommand{\sumstar}{\sideset{}{^*}\sum}
\newcommand{\leg}[2]{\left(\frac{#1}{#2}\right)}
\newcommand{\e}[2]{e\left(\frac{#1}{#2}\right)}
\newcommand{\sumflat}{\sideset{}{^\flat}\sum}
\newcommand{\lam}{\lambda}
\newcommand{\Lam}{\Lambda}
\theoremstyle{plain}		
	\newtheorem{mytheo}{Theorem}[section]
	\newtheorem{myprop}[mytheo]{Proposition}
	\newtheorem{mycoro}[mytheo]{Corollary}
     \newtheorem{mylemma}[mytheo]{Lemma}
\theoremstyle{remark}
\numberwithin{equation}{section}
\begin{document}
\title{The second moment of quadratic twists of  modular   $L$-functions}
\author{K. Soundararajan}
\address{Stanford University\\
450 Serra Mall, Building 380\\
Stanford, CA 94305-2125}
 \email{ksound@math.stanford.edu}

\author{Matthew P. Young}
\address{Department of Mathematics \\
	  Texas A\&M University \\
	  College Station \\
	  TX 77843-3368 \\
		U.S.A.}
\email{myoung@math.tamu.edu}
\thanks{ The authors are partially supported by grants from the NSF (DMS-0500711  and DMS-0758235)}

\maketitle

\section{Introduction}

\noindent   The family of quadratic twists of a modular form has 
received much attention in recent years.   Motivated 
by the Birch-Swinnerton-Dyer conjectures, we seek an understanding of the central values of the associated $L$-functions, 
and while this question has been investigated extensively, much remains 
unknown.   One important theme in this area concerns the moments 
of these central $L$-values.   Thanks to the work of Keating and Snaith \cite{KS}
there are now widely believed conjectures for the asymptotics of 
such moments, but only the asymptotic for the first moment has been proved (see 
\cite{BFH, I, MM}).  
In this paper we establish two results 
concerning the second moment of the central $L$-values. 
   Unconditionally we obtain a 
lower bound for the second moment which matches precisely the conjectured 
asymptotic formula.  Upon assuming the truth of the Generalized Riemann 
Hypothesis, we establish the conjectured asymptotic formula.

To state our results we need some notation.  For simplicity we 
shall work with modular forms of full level but our work can 
be extended to congruence subgroups.  
Let $f$ be a modular form of weight $\kappa$ for the full modular group and suppose 
that $f$ is an eigenfunction of all the Hecke operators.  We 
write the Fourier expansion of $f$ as
\begin{equation*}
f(z) = \sum_{n=1}^{\infty} \lam_f(n) n^{\frac{\kappa-1}{2}} e(nz),
\end{equation*}
with $\lam_f(1)=1$, and $f$ has been normalized so that Deligne's bound gives $|\lam_f(n)| \le d(n)$  
for all $n$, where $d(n)$ denotes the number of divisors of $n$.    
The $L$-function associated to $f$ is 
\begin{equation*}
L(s,f) = \sum_{n=1}^{\infty } \frac{\lam_f(n)}{n^s} = \prod_p \left( 1- \frac{\lam_f(p)}{p^s} + \frac{1}{p^{2s}} \right)^{-1},
\end{equation*}
which converges absolutely for Re$(s)>1$, extends analytically 
to the entire complex plane, and satisfies the functional equation 
\begin{equation*}
\Lam(s,f) = (2\pi)^{-s} \Gamma(s+\tfrac{\kappa-1}{2}) L(s,f) = i^\kappa
\Lam(1-s,f).
\end{equation*}

Let $d$ denote a fundamental discriminant, and $\chi_d(\cdot) = \leg{d}{\cdot}$
denote the primitive quadratic character of conductor $|d|$.  Let
$f\otimes \chi_d$ denote the twist of $f$ by the character $\chi_d$, 
and $L(s,f\otimes \chi_d)$ denote the twisted $L$-function 
\begin{equation*}
L(s,f\otimes \chi_d) = \sum_{n=1}^{\infty} \frac{\lam_f(n)}{n^{s}} 
\chi_d(n).
\end{equation*}
We set 
\begin{equation*}
\Lam(s,f\otimes \chi_d) = \leg{|d|}{2\pi}^s \Gamma(s+\tfrac{\kappa-1}{2}) 
L(s,f\otimes\chi_d), 
\end{equation*}
and then the twisted $L$-function satisfies the 
functional equation 
\begin{equation*}
\Lam(s,f\otimes\chi_d) = i^{\kappa} \epsilon(d) \Lam(1-s,f\otimes \chi_d), 
\end{equation*}
where $\epsilon(d) = \leg{d}{-1}$ is $1$ or $-1$ depending on whether 
$d$ is positive or negative.  
Note that the sign of the functional equation is negative if $\kappa \equiv 2\pmod 4$ and $d$ is 
positive, or if $\kappa\equiv 0\pmod 4$ and $d$ is negative, and in these cases 
the central $L$-value is zero.   


Below we shall use $\sum^*$ to denote a sum over square-free integers, and 
$\sum^{\flat}$ to denote a sum over fundamental discriminants.  
 
\begin{mytheo} 
\label{thm:lowerbound}  Let $\kappa \equiv 0\pmod 4$, and keep notations as above.  Then  
\begin{equation*}
\sumstar_{\substack{0< 8d \leq X \\ (d,2)=1}} L(\tfrac 12,f\otimes \chi_{8d})^2
\geq (c + o(1)) X \log{X}  
\end{equation*}
where 
$$
c= \frac{2}{\pi^2} L(1,\text{\rm sym}^2 f)^3 Z_2(0,0),
$$
and the value $Z_2(0,0)$ is defined in \eqref{eq:Z(u,v)} and \eqref{eq:Z(u,v)2}.
\end{mytheo}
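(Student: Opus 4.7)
The plan is to establish the lower bound via Cauchy--Schwarz against a mollifier-type Dirichlet polynomial. Introduce a smooth, nonnegative weight $\Phi$ supported in $[1,2]$, and define
$$A(d) = \sum_{n \leq Y} \frac{\lam_f(n)\chi_{8d}(n)}{\sqrt{n}} P\!\left(\frac{\log(Y/n)}{\log Y}\right),$$
where $Y$ is a suitable power of $X$ and $P$ is a polynomial with $P(0)=0$, $P(1)=1$, to be optimized. Setting
$$M_1 := \sumstar_{(d,2)=1} L(\thalf, f\otimes\chi_{8d}) A(d) \Phi(8d/X), \qquad M_2 := \sumstar_{(d,2)=1} A(d)^2 \Phi(8d/X),$$
Cauchy--Schwarz gives $\sumstar L(\thalf, f\otimes\chi_{8d})^2 \Phi(8d/X) \geq |M_1|^2/M_2$, and the task reduces to evaluating $M_1$, $M_2$ and showing their ratio matches $c X \log X$ for a well-chosen $P$.

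For $M_2$, one expands the square into a double sum over $m,n\leq Y$ and evaluates the inner character sum $\sumstar_{(d,2)=1}\chi_{8d}(mn)\Phi(8d/X)$ using a M\"obius sieve for squarefreeness together with Poisson summation on $d$. The main term comes from the diagonal $mn = \square$; via the identity $\sum_n \lam_f(n)^2 n^{-s} = \zeta(s) L(s, \mathrm{sym}^2 f) / \zeta(2s)$ and a Mellin-transform computation, it is of order $X \log Y$ with coefficient proportional to $L(1, \mathrm{sym}^2 f)$ and a quadratic form in $P$. For $M_1$, one applies the approximate functional equation to $L(\thalf, f\otimes\chi_{8d})$: since $\kappa\equiv 0\pmod 4$ and $d>0$ force the root number to be $+1$, this collapses to a single smoothed Dirichlet sum of length $\sim\sqrt{8d}$. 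Combined with $A(d)$ and analyzed by the same character-sum machinery, the diagonal $mn=\square$ contributes a main term of order $X\log X$ featuring $L(1,\mathrm{sym}^2 f)^2$. Forming $|M_1|^2/M_2$, the $L(1,\mathrm{sym}^2 f)$ factors assemble into $L(1,\mathrm{sym}^2 f)^3$, while the arithmetic factor $Z_2(0,0)$ emerges from the $2$-adic local density attached to restricting to discriminants of the form $8d$ with $d$ odd. Optimizing $P$ then reproduces the predicted constant $c$.

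The principal obstacle will be the off-diagonal analysis in $M_1$. Via quadratic reciprocity (for odd $mn$) one rewrites $\chi_{8d}(mn)$ as a fixed character in $d$ of bounded conductor, and Poisson summation then produces a dual sum whose terms are weighted by Gauss sums of $\chi_{mn}$. Showing that their total contribution is $o(X \log X)$ uniformly in $m\lesssim\sqrt{X}$ and $n\leq Y$ requires careful control of the interaction between the M\"obius sieve, the Poisson dual, and the oscillations of the $\lam_f$-coefficients; it is this step that constrains the admissible length $Y$ and ultimately determines how close the Cauchy--Schwarz ratio gets to the conjectured asymptotic. The same machinery applies in principle to $M_2$, but there the effective ranges are smaller and the analysis is correspondingly easier.
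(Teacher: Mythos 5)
Your proposal follows the same Cauchy--Schwarz strategy as the paper: compare the second moment to $|M_1|^2/M_2$ where $M_1$ pairs $L(\tfrac12,f\otimes\chi_{8d})$ against a long resonator $A(d)$ and $M_2$ is the second moment of $A(d)$, then evaluate both via a M\"obius sieve for squarefreeness followed by Poisson summation in $d$. The off-diagonal machinery you describe --- Gauss sums plus the quadratic large sieve --- is also what the paper's Proposition~\ref{mainprop} uses.

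The significant difference, and the place where your argument as stated would not deliver the theorem, is in what the resonator is and what ``optimization'' accomplishes. You take a mollifier-shaped polynomial $a_n = P\bigl(\log(Y/n)/\log Y\bigr)$ with $P(0)=0$, $P(1)=1$, with $Y$ ``a suitable power of $X$,'' and assert that optimizing $P$ reproduces the constant $c$. That is not how the constant is recovered. For any fixed $Y = X^{\theta}$ with $\theta<1$, the Cauchy--Schwarz ratio is bounded above by roughly $\theta\,c\,X\log X$ for every admissible $P$: the numerator and denominator each carry a single factor of $\log Y$ in their main terms, so the ratio scales with $\log Y$, and no choice of polynomial shape can manufacture the missing $(1-\theta)\log X$. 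To reach $c$ one must take $Y = X^{1-\varepsilon}$ and let $\varepsilon\to 0$, and once that is done, $P$ need only approximate the constant function $1$ (which $P(0)=0$ permits only in the limit, e.g.\ via $P(x)=1-(1-x)^m$). The paper sidesteps all this by taking the resonator to be $\mathcal{A}_U(\tfrac12;8d)=2\sum_n \lambda_f(n)\chi_{8d}(n)n^{-1/2}W(n/U)$, where $W$ is precisely the approximate-functional-equation weight; this is (essentially) a truncation of $L$ itself, so Cauchy--Schwarz is nearly an equality and no optimization is needed. The whole difficulty is then pushed into showing the off-diagonal terms are acceptable for $U$ as large as $X^{1-\varepsilon}$; this is the content of Proposition~\ref{mainprop} with its error $O((U_1U_2)^{1/4}X^{1/2+\varepsilon})$. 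You correctly flag that the admissible length $Y$ controls how close the ratio gets to the conjecture, but your framing --- that a polynomial $P$ can be optimized to ``reproduce'' $c$ --- suggests the constant can be recovered at shorter length, which it cannot.

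Two smaller inaccuracies. The diagonal Euler product governing
\[
\sum_{\substack{(n_1 n_2,2)=1\\ n_1 n_2=\square}}\frac{\lambda_f(n_1)\lambda_f(n_2)}{n_1^{1/2+u}n_2^{1/2+v}}\prod_{p\mid n_1 n_2}\frac{p}{p+1}
\]
factors as $\zeta(1+u+v)\,L(1+2u,\mathrm{sym}^2 f)\,L(1+2v,\mathrm{sym}^2 f)\,L(1+u+v,\mathrm{sym}^2 f)\,Z_2(u,v)$, with $Z_2$ a product of local corrections at \emph{all} primes; it is not obtained from the identity $\sum_n\lambda_f(n)^2 n^{-s}=\zeta(s)L(s,\mathrm{sym}^2 f)/\zeta(2s)$ that you cite, and $Z_2(0,0)$ is not merely a $2$-adic density. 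Moreover the $L(1,\mathrm{sym}^2 f)^3$ in $c$ arises as the residue of this four-fold product at the double pole $u=0$ (after first taking the $v=0$ residue), producing a single $\log U$; it does not ``assemble'' by multiplying separate $L(1,\mathrm{sym}^2 f)^2$ and $L(1,\mathrm{sym}^2 f)$ factors from $M_1$ and $M_2$ in the manner you suggest.
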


In Theorem \ref{thm:lowerbound},  $c$ is the constant predicted by the Keating-Snaith conjectures, see \cite{KS, CFKRS}.   
For simplicity we have restricted attention to fundamental discriminants of 
the form $8d$, but we may also handle similarly all discriminants. 

Rudnick and Soundararajan \cite{RS1, RS2} have described a general method 
to obtain lower bounds for moments in families of $L$-functions.  
Their method would readily give a bound $\gg X\log X$ in Theorem \ref{thm:lowerbound}.  

The problem of estimating the second moment of quadratic twists 
of a modular form is comparable in difficulty with that of estimating 
the fourth moment of central values of quadratic Dirichlet $L$-functions.  
Analogously to Theorem \ref{thm:lowerbound} we could obtain a lower bound for that 
fourth moment which matches precisely the conjectured asymptotic 
formula; this was stated without proof in \cite{RS2}.  

\begin{mytheo} 
\label{thm:GRHasymp}  Suppose the Generalized Riemann Hypothesis 
holds for the family of $L$-functions $L(s,f\otimes \chi_{d})$ for 
all fundamental discriminants $d$, and also for $\zeta(s)$ and $L(s,\text{\rm sym}^2 f)$.  Then, 
for $\kappa\equiv 0\pmod 4$, and with $c$ being the constant in Theorem \ref{thm:lowerbound}, 
\begin{equation*}
\label{eq:1.8}
\sumstar_{\substack{ 0< 8d \le X \\ (d,2)=1}}  L(\tfrac 12,f\otimes \chi_{8d})^2 
= (c+o(1)) X \log X.
\end{equation*}
\end{mytheo}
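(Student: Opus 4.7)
The plan is to complement Theorem~\ref{thm:lowerbound} with a matching upper bound: granted the lower bound $(c+o(1))X\log X$, Theorem~\ref{thm:GRHasymp} reduces to showing
\[
\sumstar_{\substack{0<8d\leq X\\(d,2)=1}} L(\tfrac12,f\otimes\chi_{8d})^2 \leq (c+o(1))X\log X
\]
under GRH for the family and for $\zeta(s)$, $L(s,\text{\rm sym}^2 f)$. Theorem~\ref{thm:lowerbound} then pins down the asymptotic.

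I would begin by inserting a smooth weight $W(8d/X)$ and opening the square via the approximate functional equation,
\[
L(\tfrac12,f\otimes\chi_{8d})^2 \;=\; 2\sum_{m,n\geq 1}\frac{\lam_f(m)\lam_f(n)\chi_{8d}(mn)}{\sqrt{mn}}\,\omega_d(mn),
\]
where $\omega_d$ is a smooth cutoff of effective length $\asymp d^2$ produced by the degree-$4$ completed gamma factor. Interchanging the $d$-sum with the $m,n$-sum and separating the diagonal $mn=\square$ from the off-diagonal $mn\neq\square$, I would handle the diagonal via the Rankin--Selberg factorization
\[
\sum_{n\geq 1}\frac{\lam_f(n)^2}{n^s} \;=\; \frac{\zeta(s)L(s,\text{\rm sym}^2 f)}{\zeta(2s)},
\]
together with an Euler-product analysis of the quadratic character sum $\sumstar_d\chi_{8d}(k^2)W(8d/X)$. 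A standard contour shift picking up the pole of $\zeta(s)$ at $s=1$ yields a main term of size $(c+o(1))X\log X$ with $c$ the explicit constant from Theorem~\ref{thm:lowerbound}; this step is unconditional and mirrors the matching calculation from the lower-bound argument.

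The crux is to show that the off-diagonal is $o(X\log X)$. For each fixed non-square $\ell=mn$, I would apply Poisson summation to the inner $d$-sum, converting $\sumstar_d\chi_{8d}(\ell)W(8d/X)$ into a dual sum weighted by quadratic Gauss sums and the Fourier transform of $W$. Re-assembling the Mellin variable and shifting contours back toward the critical line, the off-diagonal is expressed as an integral against combinations of the twisted $L$-functions $L(s,f\otimes\chi_\ell)$ and $L(s,\text{\rm sym}^2 f\otimes\chi_\ell)$. At this juncture GRH is invoked to furnish Lindel\"of-type bounds $L(\tfrac12+it,f\otimes\chi_\ell)\ll_\epsilon((1+|t|)|\ell|)^\epsilon$ and likewise for the symmetric-square twists, yielding absolute convergence of the $\ell$-integral with an overall saving of a power of $\log X$. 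GRH for $\zeta(s)$ is used in the usual way to remove the square-free sieving without loss.

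The main obstacle is the precise tracking of the off-diagonal after Poisson: the dual sum may contain terms coming from poles or near-poles lying just to the right of the critical line, and one must verify, via GRH-powered contour manipulations uniform in $\ell$, that these contribute only to the error and not to a secondary main term that would shift the leading constant. Once the off-diagonal is shown to be negligible on the scale $X\log X$, combining the resulting upper bound with Theorem~\ref{thm:lowerbound} yields the asymptotic with the predicted leading coefficient $c$.
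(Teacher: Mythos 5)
There is a genuine gap at the point where you invoke GRH. You claim that Lindel\"of-type pointwise bounds $L(\tfrac12+it,f\otimes\chi_\ell)\ll_\varepsilon((1+|t|)|\ell|)^\varepsilon$ control the off-diagonal. They do not. After Poisson summation the off-diagonal becomes an average over fresh fundamental discriminants $k_1$ of quantities like $|L(1+u,f\otimes\chi_{k_1})|^2$ near the half-line; summing the pointwise Lindel\"of bound over $|k_1|\le K$ gives $\ll K^{1+\varepsilon}$, which is \emph{exactly} the strength of the unconditional Heath-Brown large sieve (Corollary~\ref{coro:HB}). Those bounds already appear in Proposition~\ref{mainprop} and, with the approximate functional equation taken at its natural length $U_1U_2\asymp X^2$, they produce an error of order $X^{1+\varepsilon}$ --- which is \emph{not} $o(X\log X)$. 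GRH viewed as Lindel\"of buys you nothing here; the $X^\varepsilon$ loss is fatal, and the ``off-diagonal is negligible'' step simply does not close.

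What the paper actually uses from GRH is something much sharper and qualitatively different: the shifted second-moment upper bound of Theorem~\ref{thm:GRHupperbound}/Corollary~\ref{shiftbounds}, namely
$$\sumflat_{|d|\le X}|L(\sigma+it_1,f\otimes\chi_d)L(\sigma+it_2,f\otimes\chi_d)|\ll X(\log X)^{\frac12+\varepsilon}\left(1+\min\!\big((\log X)^{\frac12},|t_1-t_2|^{-\frac12}\big)\right),$$
which saves the full power of $\log X$ (not merely $X^\varepsilon$) and moreover exploits decorrelation as $|t_1-t_2|$ grows. This is proved by extending Soundararajan's GRH method for upper bounds on moments --- bounding the frequency of large values of $\log|L(\tfrac12+z_1)L(\tfrac12+z_2)|$ via moments of short Dirichlet polynomials (Lemma~\ref{lemma:quadDirichletpol} and Proposition~\ref{prop:measure}) --- and is the genuinely new ingredient. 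The architecture of the proof also differs from what you describe: rather than a full-length AFE with a diagonal/off-diagonal split, the paper truncates, writing $L(\tfrac12,f\otimes\chi_{8d})=\mathcal{A}_U+\mathcal{B}_U$ with $U=X/(\log X)^{100}$, evaluates $\sum|\mathcal{A}_U|^2$ asymptotically via the machinery of Proposition~\ref{mainprop} with error terms improved by Corollary~\ref{shiftbounds}, and bounds $\sum|\mathcal{B}_U|^2\ll X(\log X)^{\frac12+\varepsilon}(\log\log X)^2$ directly from Corollary~\ref{shiftbounds} (here the $\min(\ldots,|t_1-t_2|^{-1/2})$ saving is essential). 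Your proposal identifies the right surface features (AFE, Poisson, $mn=\square$ diagonal, the constant $c$) but is missing the actual engine that makes the GRH hypothesis do work.
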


Our method would allow us to get an error term in Theorem \ref{thm:GRHasymp} which is 
smaller than the main term by a small power of $\log X$.  If we consider 
a smooth sum over the discriminants $8d$ in place of the ``sharp cut-off" $0<8d \le X$ 
we would get an error term of $O(X (\log X)^{\frac 34+\varepsilon})$, see Section \ref{section:asymponGRH} below.  

The new input in Theorem \ref{thm:GRHasymp} arises from recent work of the first author \cite{SoundMoment}
on obtaining upper bounds for moments of $L$-functions assuming the 
GRH.  The work there will show that our second moment is (on GRH) bounded above 
by $X(\log X)^{1+\varepsilon}$.  To refine this to the asymptotic given here, we need to 
extend the technique in \cite{SoundMoment} to bound shifted moments of $L$-functions; 
for a precise statement see Theorem \ref{thm:GRHupperbound} below.  Similar upper bounds 
for analogous shifted moments for the Riemann zeta-function have been recently obtained 
by Chandee, see \cite{Chandee}.

As with Theorem \ref{thm:lowerbound} for simplicity we have restricted attention to fundamental discriminants of 
the form $8d$, and we may adapt our methods to cover other discriminants.   In families of 
discriminants where the sign of the functional equation is negative, we may adapt our 
methods to study the second moment of the derivative of the $L$-function at $\frac 12$.   
Further, we may adapt the technique described here to obtain an asymptotic 
formula for the fourth moment of quadratic Dirichlet $L$-functions, conditional on the 
GRH.    We note here the recent work of Bucur and Diaconu \cite{BD} which treats 
the fourth moment of quadratic Dirichlet $L$-functions over the rational function field.  

Given two Hecke eigenforms $f$ and $g$ (with weights that are congruent modulo $4$) it is a very interesting problem to understand averages of 
$ L(\frac 12, f\otimes \chi_{d}) L(\tfrac 12, g\otimes \chi_{d})$.  An asymptotic formula or lower bound for this quantity could be used to show that 
there are quadratic twists for which $L(\tfrac 12, f\otimes \chi_d)$ and $L(\tfrac 12, g\otimes \chi_d)$ 
are both non-zero; a result that is as yet unknown.  Unfortunately the methods of this 
paper do not shed any light on this problem.




\section{Basic tools}

\noindent  In this section we gather some of the standard formulas and estimates we shall need.

\subsection{The approximate functional equation}

\noindent Let $d$ be a fundamental discriminant and let
$s$ be a complex number in the critical strip.  We define (for any positive $c$) 
\begin{equation*}
W_{w}(x) := \frac{1}{2\pi i} \int_{(c)} \frac{\Gamma(w+\tfrac{\kappa - 1}{2} + s)}{\Gamma(w+ \tfrac{\kappa - 1}{2} )} 
(2\pi x)^{-s} 
\frac{ds}{s}.
\end{equation*}
A particular case is when $w=1/2$ where we 
have 
\begin{equation*}
 W_{1/2}(x) =  \frac{1}{2\pi i} \int_{(c)}  \frac{g(s)}{s} x^{-s} ds, \quad \text{where} \quad g(s) = (2 \pi )^{-s} \frac{\Gamma(\frac{\kappa}{2} + s)}{\Gamma(\frac{\kappa}{2})}.
\end{equation*}
We also set 
\begin{equation*}
{\mathcal{A}}(s,d):= \sum_{n=1}^{\infty} 
\frac{\lam_f(n) \chi_d(n)}{n^s} W_s\left(\frac{ n}{|d|}\right). 
\end{equation*}
The function $W_{w}(x)$ decays rapidly as $x\to \infty$; this may be checked 
by taking $c$ suitably large in the definition of $W_w(x)$, and using Stirling's 
formula.  

\begin{mylemma}  
\label{lem:AFE}
With notations as above we have
that 
\begin{equation*}
L(s,f\otimes\chi_d) 
= {\mathcal{A}}(s,d) + i^{\kappa} \epsilon(d) 
\left(\frac{|d|}{2\pi}\right)^{1-2s} 
\frac{\Gamma(1-s)}{\Gamma(s)} {\mathcal{A}}(1-s,d).
\end{equation*}
\end{mylemma}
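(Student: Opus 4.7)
The strategy is the classical Mellin inversion plus contour-shift argument for approximate functional equations.

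The plan is to introduce the auxiliary contour integral
\begin{equation*}
I(s,d) := \frac{1}{2\pi i}\int_{(c)} \frac{\Gamma(s+u+\tfrac{\kappa-1}{2})}{\Gamma(s+\tfrac{\kappa-1}{2})}\left(\frac{|d|}{2\pi}\right)^{u}L(s+u, f\otimes\chi_d)\,\frac{du}{u},
\end{equation*}
with $c>1$ large enough that the Dirichlet series for $L(s+u,f\otimes\chi_d)$ converges absolutely on the contour. Expanding this series and interchanging the sum and the integral, each resulting $u$-integral equals $W_s(n/|d|)$ directly from the definition of $W_s$ given above. Hence $I(s,d)=\mathcal{A}(s,d)$.

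Next, I would shift the line of integration from $\mathrm{Re}(u)=c$ to $\mathrm{Re}(u)=-c$. Stirling's formula gives exponential decay of the gamma ratio in the imaginary direction, which dominates the polynomial growth of $L(s+u,f\otimes\chi_d)$ in vertical strips and therefore justifies the shift. The only singularity crossed is the simple pole of $1/u$ at $u=0$, whose residue is $L(s,f\otimes\chi_d)$.

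To process the shifted integral, I would substitute $u\mapsto -u$ to return to the line $\mathrm{Re}(u)=c$, and then invoke the functional equation of the Introduction in the explicit form
\begin{equation*}
\Gamma(s-u+\tfrac{\kappa-1}{2})L(s-u,f\otimes\chi_d) = i^{\kappa}\epsilon(d)\left(\frac{|d|}{2\pi}\right)^{1-2(s-u)}\Gamma(1-s+u+\tfrac{\kappa-1}{2})L(1-s+u,f\otimes\chi_d).
\end{equation*}
The $\Gamma(s-u+\tfrac{\kappa-1}{2})$ in the integrand cancels, and after collecting powers of $|d|/2\pi$ the remaining $u$-integral is, by the same computation as in the first step but with $s$ replaced by $1-s$, precisely $\Gamma(1-s+\tfrac{\kappa-1}{2})\,\mathcal{A}(1-s,d)$. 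Assembling everything yields the stated identity, with the archimedean factor $\Gamma(1-s+\tfrac{\kappa-1}{2})/\Gamma(s+\tfrac{\kappa-1}{2})$ times $(|d|/2\pi)^{1-2s}$ multiplying $\mathcal{A}(1-s,d)$.

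The only point requiring a moment's care is justifying the contour shift, i.e.\ showing that the integrals over the horizontal segments $\mathrm{Im}(u)=\pm T$ vanish as $T\to\infty$. This is routine from Stirling applied to the gamma ratio together with any standard bound (even the convexity bound) for $L(s+u,f\otimes\chi_d)$ on vertical lines. I do not foresee a genuine obstacle: the lemma is entirely standard, and the derivation proceeds mechanically once the integral $I(s,d)$ is written down.
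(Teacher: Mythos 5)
Your proposal is correct and is precisely the Mellin--inversion--plus--contour--shift argument that Theorem~5.3 of Iwaniec--Kowalski uses, which is exactly what the paper points to in lieu of a self-contained proof. The auxiliary integral $I(s,d)$, the residue at $u=0$ giving $L(s,f\otimes\chi_d)$, the reflection $u\mapsto -u$, and the invocation of the functional equation are all in place and correctly executed.

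One small but worthwhile observation: your derivation yields the archimedean factor
$\Gamma\bigl(1-s+\tfrac{\kappa-1}{2}\bigr)/\Gamma\bigl(s+\tfrac{\kappa-1}{2}\bigr)$,
whereas the lemma as printed has $\Gamma(1-s)/\Gamma(s)$. These coincide at $s=\tfrac12$ (both equal $1$), which is the only point at which the paper ever applies the lemma, but they differ for general $s$ unless $\kappa=1$. Your form is the one that follows from the completed $L$-function $\Lambda(s,f\otimes\chi_d)=\bigl(\tfrac{|d|}{2\pi}\bigr)^{s}\Gamma\bigl(s+\tfrac{\kappa-1}{2}\bigr)L(s,f\otimes\chi_d)$ given in the Introduction, so the printed statement appears to contain a minor typographical slip that your argument implicitly corrects. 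No gap; the proof is complete once one records the routine Stirling estimate that justifies discarding the horizontal segments of the rectangular contour, which you correctly flag.
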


Lemma \ref{lem:AFE} is a standard ``approximate functional equation,"  see for example Theorem 5.3 of 
\cite{IK}.  Note that if $s=1/2$ then $L(\half, f \otimes \chi_d) = (1 + i^{\kappa} \epsilon(d)) \mathcal{A}(\half, d)$.

\subsection{Poisson summation}
We now quote Lemma 2.6 of \cite{Sound}.
\begin{mylemma}
\label{lemma:Poisson}
 Let $F$ be a smooth function with compact support on the positive real numbers, and suppose that $n$ is an odd integer.  Then
\begin{equation*}
 \sum_{(d,2) = 1} \leg{d}{n} F\left(\frac{d}{Z}\right) = \frac{Z}{2n} \leg{2}{n} \sum_{k \in \mz} (-1)^k G_k(n) \widehat{F}\left(\frac{kZ}{2n}\right),
\end{equation*}
where
\begin{equation*}
 G_k(n) = \left(\frac{1-i}{2} + \leg{-1}{n} \frac{1+i}{2}\right) \sum_{a \shortmod{n}} \leg{a}{n} \e{ak}{n},
\end{equation*}
and
\begin{equation*}
 \widehat{F}(y) = \intR (\cos(2 \pi x y) + \sin(2 \pi x y)) F(x) dx
\end{equation*}
is a Fourier-type transform of $F$.
\end{mylemma}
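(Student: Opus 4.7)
The plan is to apply Poisson summation to $S := \sum_{(d,2)=1} \leg{d}{n} F(d/Z)$. Since $n$ is odd, the coefficient $d \mapsto \leg{d}{n} \mathbf{1}_{d\,\text{odd}}$ is periodic modulo $2n$ by the Chinese Remainder Theorem. Grouping the sum by residue classes modulo $2n$ and applying ordinary Poisson summation on each arithmetic progression yields
\begin{equation*}
S \;=\; \frac{Z}{2n} \sum_{k \in \mz} \widehat{F}_{-}\!\left(\frac{kZ}{2n}\right) T(k), \qquad T(k) \;:=\; \sum_{\substack{a \shortmod{2n} \\ 2 \nmid a}} \leg{a}{n} \e{ak}{2n},
\end{equation*}
where $\widehat{F}_{-}(y) := \int_{-\infty}^{\infty} F(x)\, e(-xy)\, dx$ is the ordinary Fourier transform.

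The next step is to evaluate the Gauss-type sum $T(k)$. Using CRT, parametrize $a = b + nc$ with $b \shortmod n$ and $c \in \{0,1\}$; since $n$ is odd, $a$ is odd iff $b+c$ is odd. Inserting $\mathbf{1}_{a\,\text{odd}} = \tfrac12(1-(-1)^a)$ and carrying out the $c$-sum, one finds that only even $k$ receive a contribution from the $1$ piece and only odd $k$ from the $(-1)^a$ piece. Using the identity $(-1)^b\, \e{kb}{2n} = \e{(k+n)b}{2n}$ in the odd-$k$ case (so that $k+n$ is even and the exponential simplifies to $e(mb/n)$ with $m=(k+n)/2$), and then performing in both cases the bijective substitution $b = 2a$ on $\mz/n\mz$ (legitimate because $2$ is invertible mod $n$), the two parities fuse into the uniform expression
\begin{equation*}
T(k) \;=\; (-1)^k \leg{2}{n} \sum_{a \shortmod n} \leg{a}{n} \e{ak}{n}.
\end{equation*}
The factor $\leg{2}{n}$ emerges from $\leg{2a}{n} = \leg{2}{n}\leg{a}{n}$, and the sign $(-1)^k$ encodes the fact that the odd-$k$ branch came in with an overall minus.

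The third step converts the exponential transform $\widehat{F}_{-}$ into the symmetric transform $\widehat{F}$ of the lemma. Writing $g(k) := \sum_a \leg{a}{n} e(ak/n)$, the substitution $a \mapsto -a$ gives $g(-k) = \leg{-1}{n}\, g(k)$. Pairing the $k$-th and $(-k)$-th terms in the Poisson sum, and using
\begin{equation*}
\cos(2\pi xy) + \sin(2\pi xy) \;=\; \tfrac{1-i}{2}\, e(xy) \;+\; \tfrac{1+i}{2}\, e(-xy),
\end{equation*}
a short case analysis on $\leg{-1}{n} = \pm 1$ shows that $\widehat F_-(y_k) + \leg{-1}{n} \widehat F_-(-y_k)$ equals $c(n)\bigl(\widehat F(y_k) + \leg{-1}{n} \widehat F(-y_k)\bigr)$ with $c(n) = \tfrac{1-i}{2} + \leg{-1}{n}\tfrac{1+i}{2}$. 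Consequently, $T(k)\, \widehat F_-(y_k)$ repackages as $(-1)^k G_k(n)\, \widehat F(y_k)$ with the $G_k(n)$ in the statement, and the identity of the lemma follows.

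The main obstacle is bookkeeping rather than any deep idea. Since $\leg{\cdot}{n}$ is only a Jacobi symbol (not necessarily a primitive Dirichlet character), one cannot simply invoke a closed-form Gauss sum evaluation; one must track carefully the factor $\leg{2}{n}$ arising from the $1/(2n)$ denominator, the even/odd $k$ dichotomy coming from the oddness constraint on $d$, and the sign $\leg{-1}{n}$ arising under $k \mapsto -k$, and then verify that these combine precisely into the compact prefactor $\tfrac{1-i}{2} + \leg{-1}{n}\tfrac{1+i}{2}$ appearing in $G_k(n)$.
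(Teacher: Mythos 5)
Your proof is correct. Note that the paper under review does not prove this lemma at all; it simply quotes Lemma~2.6 of \cite{Sound} (Soundararajan's nonvanishing paper), so there is no in-paper argument to compare against. Your derivation follows what is essentially the standard route that the original source also takes: split the odd $d$ into progressions mod $2n$, apply Poisson summation on each progression, factor the resulting complete exponential sum over $\mathbb{Z}/2n\mathbb{Z}$ by CRT into a $2$-part and an $n$-part (where the $2$-part forces the even/odd $k$ dichotomy and the substitution $b\mapsto 2b$ produces the $\leg{2}{n}$), and finally re-express $\widehat{F}_-$ in terms of the symmetric transform $\widehat{F}$ using $\cos\theta+\sin\theta=\tfrac{1-i}{2}e^{i\theta}+\tfrac{1+i}{2}e^{-i\theta}$ together with $h(-k)=\leg{-1}{n}h(k)$.

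One small point of hygiene in your Step~3: the phrase ``pairing the $k$-th and $(-k)$-th terms'' implicitly treats $k=0$ as its own pair, and when $\leg{-1}{n}=-1$ that term only cancels because $h(0)=\sum_{a\bmod n}\leg{a}{n}=0$ in that case (the Jacobi symbol is a nontrivial character on $(\mathbb{Z}/n\mathbb{Z})^\times$ unless $n$ is a perfect square, in which case $n\equiv 1\pmod 8$ and $\leg{-1}{n}=1$). A cleaner formulation that sidesteps this entirely is the global one: writing $P=\sum_k(-1)^k h(k)\widehat{F}_-(y_k)$, the substitution $k\mapsto -k$ gives $\sum_k(-1)^k h(k)\widehat{F}_-(-y_k)=\leg{-1}{n}P$, hence $\sum_k(-1)^k h(k)\widehat{F}(y_k)=\bigl(\tfrac{1+i}{2}+\leg{-1}{n}\tfrac{1-i}{2}\bigr)P$, and the prefactor $c(n)=\tfrac{1-i}{2}+\leg{-1}{n}\tfrac{1+i}{2}$ is precisely the reciprocal of this coefficient in both cases $\leg{-1}{n}=\pm1$. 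This is the same computation you did, just phrased so that no term-by-term pairing is needed.
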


The Gauss-type sum $G_k(n)$ has been calculated explicitly in Lemma 2.3 of \cite{Sound}
which we quote below.

\begin{mylemma}
\label{lemma:Gk}
If $m$ and $n$ are relatively prime odd integers, then $G_k(mn) = G_k(m) G_k(n)$, and if $p^{\alpha}$ is the largest power of $p$ dividing $k$ (setting $\alpha=\infty$ if $k=0$), then
\begin{equation*}
\label{eq:Gauss}
 G_k(p^{\beta}) = 
\begin{cases}
 0, \qquad & \text{if $\beta \leq \alpha$ is odd}, \\
 \phi(p^{\beta}), \qquad & \text{if $\beta \leq \alpha$ is even}, \\
 -p^{\alpha}, \qquad & \text{if $\beta = \alpha + 1$ is even}, \\
 \leg{kp^{-\alpha}}{p} p^{\alpha} \sqrt{p}, \qquad & \text{if $\beta = \alpha +1$ is odd}, \\
 0, \qquad & \text{if $\beta \geq \alpha +2$.}
\end{cases}
\end{equation*}
\end{mylemma}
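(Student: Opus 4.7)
The cleanest route is to rewrite $G_k(n) = \epsilon_n \, \tau_k(n)$, where
\begin{equation*}
\epsilon_n = \tfrac{1-i}{2} + \leg{-1}{n}\tfrac{1+i}{2}, \qquad \tau_k(n) = \sum_{a \shortmod n} \leg{a}{n} \e{ak}{n}.
\end{equation*}
A direct check gives $\epsilon_n = 1$ when $n\equiv 1\pmod 4$ and $\epsilon_n = -i$ when $n\equiv 3\pmod 4$; in particular, comparing the three cases for $(m \bmod 4, n \bmod 4)$ one sees $\epsilon_{mn} = \epsilon_m \epsilon_n \cdot (-1)^{\frac{m-1}{2}\frac{n-1}{2}}$. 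This isolates the sign issue for multiplicativity.

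For the multiplicativity claim, I would use CRT to write each $a$ mod $mn$ as $a \equiv m\bar m\alpha + n\bar n\beta$ with $\bar m m\equiv 1\pmod n$ and $\bar n n\equiv 1\pmod m$. Then $\leg{a}{mn} = \leg{\alpha}{n}\leg{\beta}{m}$ (the $\leg{n\bar n}{m}$ and $\leg{m\bar m}{n}$ factors are $1$) and the exponential factorizes as $e(\bar m\alpha k/n)\, e(\bar n\beta k/m)$. After the bijective change of variables $\alpha\mapsto \bar m\alpha$ and $\beta\mapsto \bar n\beta$, this yields
\begin{equation*}
\tau_k(mn) = \leg{m}{n}\leg{n}{m}\, \tau_k(m)\tau_k(n).
\end{equation*}
Combining with quadratic reciprocity and the identity for $\epsilon_{mn}$ above, the reciprocity factor cancels and $G_k(mn)=G_k(m)G_k(n)$ drops out.

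For the evaluation at prime powers I would split on the parity of $\beta$. When $\beta$ is even, $\leg{a}{p^\beta}=1$ for $(a,p)=1$, so $\tau_k(p^\beta)$ is the Ramanujan sum $c_{p^\beta}(k) = \sum_{d\mid (p^\beta,k)}\mu(p^\beta/d)d$, whose standard evaluation immediately yields $\phi(p^\beta)$ for $\beta\le\alpha$, $-p^\alpha$ for $\beta=\alpha+1$, and $0$ for $\beta\ge \alpha+2$. When $\beta$ is odd, write $k = p^\alpha k'$ with $(k',p)=1$ and treat three subcases:
\begin{itemize}
\item $\beta\le\alpha$: the exponential is $1$, so $\tau_k(p^\beta) = \sum_{a\in(\mz/p^\beta)^*}\leg{a}{p}$; grouping lifts above $\mz/p\mz$ makes this vanish.
\item $\beta=\alpha+1$: after a similar grouping, $\tau_k(p^\beta) = p^{\beta-1}\leg{k'}{p}\, g_1(p)$ where $g_1(p)=\sum_{a \shortmod p}\leg{a}{p}\e{a}{p}$. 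The classical value $g_1(p)=\sqrt p$ for $p\equiv 1\pmod 4$ and $i\sqrt p$ for $p\equiv 3\pmod 4$, multiplied against $\epsilon_{p^\beta}$ (which is $1$ and $-i$ respectively since $\beta$ is odd), contributes exactly $\sqrt p$, giving $G_k(p^\beta)=\leg{kp^{-\alpha}}{p}p^\alpha\sqrt p$.
\item $\beta\ge\alpha+2$: setting $m=\beta-\alpha\ge 2$ and writing $a\equiv a_1\pmod{p^m}$ followed by a second layer $a_1 = a_1' + p^{m-1}j$ with $j\in\mz/p\mz$, the inner $j$-sum becomes $\sum_{j=0}^{p-1}e(jk'/p)=0$ since $(k',p)=1$, so $\tau_k(p^\beta)=0$.
\end{itemize}

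The only real obstacle is keeping the signs consistent: getting the reciprocity factor to cancel in the multiplicativity step, and correctly combining $\epsilon_{p^\beta}$ with the classical Gauss sum $g_1(p)$ in the $\beta=\alpha+1$ odd case. Both reduce to tracking $n\bmod 4$, so with the preliminary calculation of $\epsilon_n$ in hand the rest is bookkeeping.
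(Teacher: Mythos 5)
Your proposal is correct, and there is nothing in the paper to compare it against: the paper does not prove this lemma but quotes it verbatim from Lemma 2.3 of the cited reference \cite{Sound}. Your argument --- factoring $G_k(n)=\epsilon_n\tau_k(n)$ so that the $\epsilon$-identity cancels the quadratic-reciprocity factor from the CRT computation, and then evaluating $\tau_k(p^{\beta})$ via Ramanujan sums for even $\beta$ and via the classical Gauss sum $g_1(p)$ (with the $j$-sum vanishing when $\beta\ge\alpha+2$) for odd $\beta$ --- is precisely the standard proof given in that reference, and all the sign bookkeeping in your sketch checks out.
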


\subsection{The large sieve for quadratic characters}
Heath-Brown \cite{H-B} proved the following large-sieve type inequality for quadratic characters.
\begin{mytheo}
\label{thm:H-B}
 For any $M, N \geq 1$ and any sequence of complex numbers $a_n$, we have
\begin{equation*}
 \sumstar_{\substack{m \leq M\\  m\ \text{\rm odd}}} \Big| \sumstar_{n \leq N} a_n \leg{n}{m} \Big|^2 \ll (MN)^{\varepsilon} (M + N) \sum_{n \leq N} |a_n|^2.
\end{equation*}
\end{mytheo}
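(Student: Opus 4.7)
The plan is to prove the bound via smoothing, expanding the square, and exploiting Poisson summation combined with the near-self-duality of the resulting bilinear form under quadratic reciprocity.

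First I would replace the sharp cutoff on $m$ by a smooth nonnegative weight $w(m/M)$ of compact support, at the cost of only a logarithmic loss, and impose squarefreeness via M\"obius inversion $\mu^2(m) = \sum_{a^2 \mid m}\mu(a)$. Expanding the square and interchanging the order of summation yields
\begin{equation*}
\sumstar_{m} w(m/M) \left|\sumstar_{n\le N} a_n \leg{n}{m}\right|^2 = \sumstar_{n_1, n_2 \le N} a_{n_1}\overline{a_{n_2}} \sumstar_{m} \leg{n_1 n_2}{m} w(m/M).
\end{equation*}
Writing $n_1 = cu$, $n_2 = cv$ with $c = \gcd(n_1, n_2)$ and $(u,v)=1$ (so $\ell := uv$ is automatically squarefree), the character $\leg{n_1 n_2}{m}$ reduces to $\leg{\ell}{m}$ on $(m,c)=1$. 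The diagonal $\ell = 1$ forces $n_1 = n_2$ and contributes the clean term $O(M\|a\|_2^2)$, supplying the $M$-part of the bound.

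For the off-diagonal $\ell > 1$, I would apply the Poisson summation formula for quadratic characters (Lemma \ref{lemma:Poisson}) to the inner $m$-sum, obtaining a dual sum over a new variable $k$ weighted by the Gauss sum $G_k(\ell)$ and the Fourier transform $\widehat{w}$; rapid decay of $\widehat{w}$ essentially truncates $|k| \ll (\ell/M)(MN)^\varepsilon$. By Lemma \ref{lemma:Gk}, for squarefree $\ell$ the normalized Gauss sum $G_k(\ell)/\sqrt{\ell}$ reduces to $\leg{k}{\ell}$ up to a bounded phase and a condition that the squareful part of $k$ divides $\ell$. The off-diagonal then reduces to a bilinear form of schematic shape $\sum_{k,\ell} \alpha_\ell(c)\,\beta_k \leg{k}{\ell}$, in which the roles of character modulus and summation variable have effectively been swapped by Poisson duality.

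From here one iterates: the transformed form is an instance of the same inequality with ranges $\sim N$ in $\ell$ and $\sim M/N \cdot (MN)^\varepsilon$ in $k$, so dyadic induction on $(M,N)$ feeds the bound back into itself with the $(M+N)$ shape preserved. Alternatively, one can Cauchy-Schwarz in $\ell$ and invoke the P\'olya-Vinogradov bound $\sumstar_{\ell \le N} \leg{k}{\ell} \ll (kN)^{1/2+\varepsilon}$ for non-square $k$. The outer sum over the common divisor $c$ costs only $O((MN)^\varepsilon)$ by the divisor bound. The principal obstacle is executing the iteration (or the Cauchy-Schwarz step) without accumulating $\log$-factors beyond $(MN)^\varepsilon$: the classical large-sieve bound $(M+N+\sqrt{MN})\|a\|_2^2$ is insufficient here, and what upgrades the cross term $\sqrt{MN}$ to an absorbable contribution is precisely the self-duality of the quadratic form under Poisson summation, namely quadratic reciprocity between $m$ and $n$.
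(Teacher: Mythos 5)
The paper does not prove this theorem; it is Heath-Brown's quadratic large sieve, quoted verbatim (Theorem~1/Corollary~2 of \cite{H-B}), and the subsequent Corollary~\ref{coro:HB} is what the paper actually derives from it. So there is no ``paper's own proof'' to compare against; what you have written is an attempt at Heath-Brown's argument from scratch, and I will assess it on those terms.

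Your sketch correctly identifies the starting ingredients --- smoothing, expanding the square, pulling out the gcd $c$, Poisson summation in $m$, and the appearance of a dual bilinear form via $G_k(\ell)/\sqrt{\ell}\approx\leg{k}{\ell}$ for squarefree $\ell$ --- and you are right that the heart of the matter is removing the $\sqrt{MN}$ cross term that the classical large sieve would leave. But the step where you close the argument is not correct as stated, and it is precisely the hard step. After writing $n_1=cu$, $n_2=cv$ with $(u,v)=1$, the modulus of the character is $\ell=uv$, which ranges up to $N^2$ (not $N$), and the Poisson dual variable $k$ then ranges up to roughly $\ell/M\lesssim N^2/M$ (not $M/N$). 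So the transformed form is \emph{not} ``an instance of the same inequality with ranges $\sim N$ and $\sim M/N$''; the modulus has been \emph{enlarged}, not shrunk, and a naive dyadic induction on $(M,N)$ does not feed the bound back into itself in the self-improving way you suggest. Heath-Brown's actual recursion does run through Poisson duality, but it is set up on a carefully chosen auxiliary family of inequalities (tracking both the size of the modulus and the size of the coefficient support), with a nontrivial bootstrapping argument to make the loop close; that structure is absent from your outline. The alternative you offer --- Cauchy--Schwarz in $\ell$ plus P\'olya--Vinogradov on $\sum_\ell\leg{k}{\ell}$ for non-square $k$ --- also fails to close: after opening the square you get off-diagonal terms $k_1k_2\neq\square$ contributing roughly $\sqrt{L}\sum_{k_1,k_2}\sqrt{k_1k_2}\,|\beta_{k_1}\beta_{k_2}|$ with $L\sim N^2$, and this is far larger than the target $(M+N)\|a\|_2^2$ once $N$ is of moderate size relative to $M$. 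In short, the sketch assumes the self-improving structure that Heath-Brown's paper is devoted to establishing; as written it is an outline of the setup, not a proof.
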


Using the approximate functional equation Lemma \ref{lem:AFE}, and Heath-Brown's result 
we may easily deduce the following estimate (a simple modification of Theorem 2 of \cite{H-B}).

\begin{mycoro}
\label{coro:HB}
 For $\sigma \ge \frac 12$, and any $\varepsilon>0$, we have 
\begin{equation*}
\label{eq:H-B}
 \sumflat_{|d|\le X} |L(\sigma + it, f \otimes \chi_d)|^2 \ll_{\varepsilon} (X(1+|t|))^{1 + \varepsilon}.
\end{equation*}
\end{mycoro}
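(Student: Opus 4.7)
The plan is to combine the approximate functional equation from Lemma \ref{lem:AFE} with Heath-Brown's large sieve (Theorem \ref{thm:H-B}), in the spirit of the original argument in \cite{H-B}. First, applying Lemma \ref{lem:AFE} with $s = \sigma+it$ writes $L(\sigma+it, f\otimes\chi_d)$ as $\mathcal{A}(\sigma+it,d)$ plus a second term involving $\mathcal{A}(1-\sigma-it,d)$ multiplied by $(|d|/2\pi)^{1-2s}\Gamma(1-s)/\Gamma(s)$. By Stirling this factor is $\ll (|d|(1+|t|))^{1-2\sigma}$, which is $\ll 1$ in our range $\sigma \geq 1/2$. Shifting the contour in the definition of $W_w(x)$ shows that $W_w$ decays faster than any polynomial once $x \gg (1+|w|)$, so both $\mathcal{A}$-sums are effectively truncated at $n \ll X(1+|t|)^{1+\varepsilon}$. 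After a smooth dyadic partition of unity it therefore suffices to bound, uniformly in $N \leq X(1+|t|)^{1+\varepsilon}$, the quantity
\begin{equation*}
T(N) := \sumflat_{|d|\leq X}\Bigl|\sum_{n} \frac{\lam_f(n)\chi_d(n)}{n^{\sigma}}\phi(n/N)\Bigr|^2,
\end{equation*}
where $\phi$ is a fixed smooth compactly supported bump; the residual $d$-dependence inside $W_w(n/|d|)$ can be decoupled by Mellin inversion at a logarithmic cost, followed by Cauchy--Schwarz on the resulting contour.

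To access Heath-Brown's inequality, write $n = \ell^2 m$ with $m$ squarefree. For $(\ell,d)=1$ we have $\chi_d(n) = \chi_d(m)$, and $\chi_d(n) = 0$ otherwise, so after rearranging,
\begin{equation*}
T(N) \ll \sum_{\ell \ll \sqrt{N}}\sumflat_{|d|\leq X}\Bigl|\sum_{m \text{ squarefree}} b_{\ell,m}\chi_d(m)\Bigr|^2,
\end{equation*}
with $|b_{\ell,m}| \ll |\lam_f(\ell^2 m)|/(\ell^2 m)^{\sigma}$ supported on $m \ll N/\ell^2$. Every fundamental discriminant with $|d|\leq X$ equals $\pm m_0$ or $\pm 4m_0$ for some odd squarefree $m_0$, so splitting the $d$-sum by sign and residue class modulo $8$ and using quadratic reciprocity rewrites each $\chi_d(m)=(d/m)$ as $(m/|d|)$ up to a sign that depends only on the class. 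Theorem \ref{thm:H-B} applied to each class (with the roles of the outer and inner variables interchanged) yields
\begin{equation*}
\sumflat_{|d|\leq X}\Bigl|\sum_{m} b_{\ell,m}\chi_d(m)\Bigr|^2 \ll (XN)^{\varepsilon}\bigl(X+N/\ell^2\bigr)\sum_{m} |b_{\ell,m}|^2.
\end{equation*}

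Finally, the Rankin--Selberg estimate $\sum_{n\leq Y}|\lam_f(n)|^2 \ll Y^{1+\varepsilon}$ gives $\sum_m |b_{\ell,m}|^2 \ll \ell^{-4\sigma}(N/\ell^2)^{1-2\sigma+\varepsilon}$, which after summation over $\ell$ and over dyadic $N\leq X(1+|t|)^{1+\varepsilon}$ delivers the desired bound $\ll (X(1+|t|))^{1+\varepsilon}$. The main obstacle is more bookkeeping than genuine difficulty: passing from the family of fundamental discriminants to Heath-Brown's setting of Jacobi symbols indexed by odd squarefree integers requires a careful partition by sign and residue class modulo $8$, and the $t$-dependence from Stirling must be tracked through both terms of the approximate functional equation; neither step loses more than $(X(1+|t|))^{\varepsilon}$.
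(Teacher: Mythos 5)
The paper gives no proof beyond the remark that the corollary is ``a simple modification of Theorem 2 of \cite{H-B}'' obtained by combining Lemma \ref{lem:AFE} with Theorem \ref{thm:H-B}. Your proposal carries out exactly that combination in detail --- approximate functional equation, Mellin decoupling of $W$, extracting the square part $n=\ell^2 m$ (at which point a Cauchy--Schwarz in $\ell$ with a convergent weight should be inserted before pulling the $\ell$-sum outside the square, though this is routine), Heath-Brown's large sieve after sorting discriminants by sign and residue class modulo $8$, and Deligne/Rankin--Selberg to control the coefficient sums --- so it is essentially the paper's intended argument written out.
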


\section{The Main Proposition} 
\label{section:mainprop} 

In this section we describe the main calculation that leads 
to the proof of Theorem \ref{thm:lowerbound}.  The pattern of this proof is also 
followed, with a few modifications, in obtaining a stronger result 
leading to Theorem \ref{thm:GRHasymp}; we shall describe this in Section \ref{section:asymponGRH}.  

Our aim here is to establish an asymptotic formula 
for 
\[ 
S(h):= \sumstar_{(d,2)= 1} \sum_{n_1} \sum_{n_2} \frac{\lam_f(n_1) \lam_{f}(n_2)}{\sqrt{n_1n_2}} \chi_{8d}(n_1n_2) h(d,n_1,n_2),
\] 
where $h$ is a smooth function on ${\mr}_+^3$.

\begin{myprop}
\label{mainprop} 
Let $X$, $U_1$ and $U_2$ be large, and suppose that $U_1 U_2 \le X^2$.  
Let $h(x,y,z)$ be a smooth function 
on $\mr_+^3$ which is compactly supported in the $x$-variable, having all partial 
derivatives extending continuously to the boundary, and satisfying the partial derivative 
bounds  
$$ 
x^i y^j z^k h^{(i,j,k)}(x,y,z) \ll_{i,j,k} \left( 1+\frac xX\right)^{-100} \left(1+ \frac{y}{U_1}\right)^{-100} 
\left(1+ \frac{z}{U_2}\right)^{-100}. 
$$ 
Then, setting $h_1(y,z) = \int_0^{\infty} h(xX,y,z)dx$, 
$$ 
S(h) = \frac{4X}{\pi^2} 
\sum_{\substack{(n_1 n_2,2)=1 \\ n_1 n_2 = \square}} \frac{\lambda_f(n_1) \lambda_f(n_2)}{\sqrt{n_1 n_2}} \prod_{p|n_1n_2} \left( \frac{p}{p+1}\right) 
h_1 \left(n_1, n_2\right) +O((U_1 U_2)^{\frac 14} X^{\frac 12+\varepsilon}) .
$$
\end{myprop}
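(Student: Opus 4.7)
The plan is to detect the squarefree condition on $d$ by Möbius inversion, apply Lemma \ref{lemma:Poisson} to the resulting inner $d$-sum, read off the $k=0$ frequency as the claimed main term, and bound the nonzero frequencies via the multiplicative structure of $G_k$ together with the large sieve (Theorem \ref{thm:H-B}).

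Writing $\mathbf{1}_{d\text{ squarefree}}=\sum_{a^2\mid d}\mu(a)$, and using that $\chi_{8a^2b}(n_1n_2)$ vanishes unless $(a,n_1n_2)=1$ and otherwise equals $\chi_{8b}(n_1n_2)$, one obtains
\begin{equation*}
S(h) \;=\; \sum_{(a,2)=1}\mu(a)\sum_{\substack{n_1,n_2\\(a,n_1n_2)=1}}\frac{\lam_f(n_1)\lam_f(n_2)}{\sqrt{n_1n_2}}\sum_{(b,2)=1}\chi_{8b}(n_1n_2)\,h(a^2b,n_1,n_2).
\end{equation*}
Setting $F(x):=h(Xx,n_1,n_2)$ and $Z:=X/a^2$ so that $F(b/Z)=h(a^2b,n_1,n_2)$, Lemma \ref{lemma:Poisson} — after absorbing the $\leg{8}{n_1n_2}$ coming from $\chi_{8b}=\leg{8}{\cdot}\chi_b$ into the $\leg{2}{n_1n_2}$ produced by Poisson (using $\leg{8}{n}\leg{2}{n}=\leg{2}{n}^4=1$ on odd $n$) — transforms the inner $b$-sum into
\begin{equation*}
\frac{X}{2a^2 n_1 n_2}\sum_{k\in\mz}(-1)^k\,G_k(n_1n_2)\,\widehat{F}\!\left(\frac{kX}{2a^2 n_1 n_2}\right).
\end{equation*}

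The main term is the $k=0$ frequency. Lemma \ref{lemma:Gk} with $k=0$ gives $G_0(n_1n_2)=\phi(n_1n_2)$ if $n_1n_2$ is a perfect square and $0$ otherwise, while $\widehat{F}(0)=h_1(n_1,n_2)$ by the definition of $h_1$. Summing $\mu(a)/a^2$ over odd $a$ coprime to $n_1n_2$ yields
\begin{equation*}
\sum_{(a,2n_1n_2)=1}\frac{\mu(a)}{a^2} \;=\; \frac{1}{\zeta(2)}\prod_{p\mid 2n_1n_2}\Bigl(1-\frac{1}{p^2}\Bigr)^{\!-1} \;=\; \frac{8}{\pi^2}\prod_{p\mid n_1n_2}\frac{p^2}{p^2-1},
\end{equation*}
(using that $n_1n_2$ is odd), and multiplication by $\phi(n_1n_2)/(n_1n_2)=\prod_{p\mid n_1n_2}(1-1/p)$ collapses this to $(8/\pi^2)\prod_{p\mid n_1n_2}p/(p+1)$. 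Combined with the prefactor $X/2$ from Poisson, this reproduces exactly the stated main term.

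The remaining $k\ne 0$ contribution must be shown to be $O((U_1U_2)^{1/4}X^{1/2+\varepsilon})$. Repeated integration by parts, using the hypotheses on $h$, gives $\widehat{F}(y)\ll_A (1+|y|)^{-A}$ uniformly, while the decay of $h$ in its last two variables truncates $n_i$ effectively at $U_iX^\varepsilon$; jointly these restrict attention to $|k|\ll a^2n_1n_2 X^{\varepsilon-1}$. The key manoeuvre is to extract a quadratic character from the Gauss sum: writing $k=k_0\ell^2$ with $k_0$ a squarefree odd integer, and decomposing $n_1n_2$ into its squarefull and squarefree parts, Lemma \ref{lemma:Gk} shows that $G_k(n_1n_2)/\sqrt{n_1n_2}$ is essentially $\chi_{k_0}$ evaluated on the squarefree kernel of $n_1n_2$, up to tractable local factors at the primes dividing $k\ell n_1n_2$. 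The resulting bilinear character sum over $n_1,n_2$ is then bounded on average over $k_0$ by Theorem \ref{thm:H-B} combined with Cauchy--Schwarz, producing the claimed exponent. The main obstacle is this last step: handling the combinatorial bookkeeping of common factors of $n_1,n_2$, the lower-order contributions from squarefull $k$, and the non-squarefree part of $n_1n_2$, while arranging the large-sieve / Cauchy--Schwarz combination so that the final exponent of $U_1U_2$ is exactly $1/4$.
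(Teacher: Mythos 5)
Your outline correctly identifies the M\"obius/Poisson skeleton, the $k=0$ Euler-factor collapse to $\prod_{p\mid n_1n_2}\tfrac{p}{p+1}$, and the factor $4X/\pi^2$, and these match the paper. But there is a genuine gap at the very first step that propagates through the whole $k\neq 0$ analysis: you never truncate the M\"obius sum over $a$. After applying Lemma \ref{lemma:Poisson} with $Z = X/a^2$, the frequency $k$ is effectively restricted by the rapid decay of $\widehat F$ to $|k|\lesssim a^2 n_1 n_2 X^{\varepsilon-1}$; since the prefactor from Poisson is only $X/(2a^2 n_1 n_2)$, the total $k\neq 0$ contribution at a fixed $a$ is of size roughly $U_1U_2/X$, \emph{independent} of $a$, and so the sum over all odd $a$ diverges. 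The paper avoids this by splitting the $a$-sum at a parameter $Y$: for $a\le Y$ one applies Poisson exactly as you propose, but for the tail $a>Y$ one does \emph{not} apply Poisson at all --- instead one regroups $da^2 = c^2\ell$ with $\ell$ squarefree, recognises the inner $n_1,n_2$-sum as a product of shifted $L$-values $L_c(\tfrac12+u,f\otimes\chi_{8\ell})L_c(\tfrac12+v,f\otimes\chi_{8\ell})$ via Mellin inversion, and bounds this tail by $O(X^{1+\varepsilon}/Y)$ using the second-moment estimate (Corollary \ref{coro:HB}), which is itself a consequence of Heath-Brown's Theorem \ref{thm:H-B}. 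The parameter $Y=X^{1/2}(U_1U_2)^{-1/4}$ is then chosen to balance this against the $k\neq0$ error, and also against the $O(X/Y)$ loss in the $k=0$ term coming from truncating $\sum_{a\le Y}\mu(a)/a^2$. None of this appears in your proposal, and without it there is no convergent estimate.

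There is a second, lesser gap: you concede that the $k\neq 0$ bookkeeping is ``the main obstacle'' and leave it as a sketch. The paper's route is different from the one you describe and worth noting. Rather than working directly with bilinear character sums and Cauchy--Schwarz, the paper takes Mellin transforms in $x,n_1,n_2$, writes $4k=k_1k_2^2$ with $k_1$ a fundamental discriminant, and packages the entire $n_1,n_2,k_2$-sum into a Dirichlet series $Z(\alpha,\beta,\gamma;q,k_1)$ which, by Lemma \ref{lemma:Z}, factors as $L_q(\tfrac12+\alpha,f\otimes\chi_{k_1})L_q(\tfrac12+\beta,f\otimes\chi_{k_1})$ times controlled Euler products. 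Shifting contours (to $\mathrm{Re}(s)=\tfrac34$ or $\tfrac54$ depending on the size of $|k_1|$, and $\mathrm{Re}(u)=\mathrm{Re}(v)=-\tfrac12+1/\log X$) and again invoking Corollary \ref{coro:HB} for the average over fundamental discriminants $k_1$ then yields $(U_1U_2)^{1/2}YX^\varepsilon$. This $L$-function/approximate-functional-equation packaging is what makes the exponent come out cleanly as $(U_1U_2)^{1/4}X^{1/2}$ after optimizing $Y$, and it sidesteps the combinatorial difficulties you flagged with squarefull parts of $k$ and common factors of $n_1,n_2$. If you want to pursue your more hands-on bilinear approach you would essentially be re-deriving Corollary \ref{coro:HB} from scratch inside the argument, which is possible but not obviously simpler.
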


We begin the proof of Proposition \ref{mainprop} by using M\"{o}bius inversion to remove the squarefree condition on $d$.  Thus we write, for an appropriate parameter $Y$ to be chosen later, 
\begin{eqnarray*}
 S(h) &=& \Big(\sum_{\substack{a \leq Y \\ (a,2)=1}} + \sum_{\substack{a > Y \\ (a,2)=1}} \Big) \mu(a)  \sum_{(d,2)=1} \sum_{(n_1,a)=1} \sum_{(n_2,a)=1} \frac{\lambda_f(n_1) \lambda_f(n_2)}{\sqrt{n_1 n_2}} \chi_{8d}(n_1n_2)h(da^2, n_1, n_2)\\
 &=&S_1(h)+ S_2(h).
\end{eqnarray*}


\subsection{Estimating $S_2(h)$}  
\label{section:S2}
We first estimate the easier term $S_2(h)$. 

\begin{mylemma} 
\label{lem:3.1}
We have
$ S_2(h) \ll X^{1 + \varepsilon} Y^{-1}$.
\end{mylemma}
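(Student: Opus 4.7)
The plan is to establish the per-$a$ bound $|T(a)|\ll X^{1+\varepsilon}/a^2$ uniformly in odd squarefree $a$, where
\[
T(a):=\sum_{(d,2)=1}\sum_{\substack{(n_1,a)=1\\(n_2,a)=1}} \frac{\lam_f(n_1)\lam_f(n_2)}{\sqrt{n_1n_2}}\chi_{8d}(n_1n_2)\,h(da^2,n_1,n_2),
\]
and then sum over $a>Y$, giving $|S_2(h)|\leq\sum_{a>Y}|T(a)|\ll X^{1+\varepsilon}\sum_{a>Y}a^{-2}\ll X^{1+\varepsilon}/Y$. The key observation is that, for each fixed $a$ and $d$, the inner $(n_1,n_2)$-sum is a smoothed version of $L(\half,f\otimes\chi_{8d})^2$, so the whole sum on $d$ is essentially a second moment of quadratic-twist $L$-values --- exactly the quantity bounded by Corollary \ref{coro:HB}.

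To make this precise I would Mellin-invert $h$ in its last two variables. Let $H(x,s_2,s_3):=\iint h(x,y,z)y^{s_2-1}z^{s_3-1}\,dy\,dz$; the smoothness and decay hypotheses on $h$ give $|H(x,s_2,s_3)|\ll_A (1+x/X)^{-100}U_1^{\operatorname{Re}(s_2)}U_2^{\operatorname{Re}(s_3)}(1+|s_2|)^{-A}(1+|s_3|)^{-A}$ for every $A>0$. Substituting into $T(a)$ and interchanging (legitimate at initial contours $\operatorname{Re}(s_2),\operatorname{Re}(s_3)>\half$), the inner sum over $n_1,n_2$ coprime to $a$ becomes the product $L^{(a)}(\half+s_2,f\otimes\chi_{8d})\,L^{(a)}(\half+s_3,f\otimes\chi_{8d})$, where the superscript $(a)$ indicates removal of the Euler factors at primes dividing $a$; these removed factors contribute a bounded multiple of $a^\varepsilon$ on $\operatorname{Re}(s_i)=\varepsilon$ by Deligne's bound $|\lam_f(p)|\leq 2$. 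Since $L(s,f\otimes\chi_{8d})$ is entire, I shift both contours to $\operatorname{Re}(s_2)=\operatorname{Re}(s_3)=\varepsilon$ without crossing poles.

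The remaining $d$-sum is then handled by Corollary \ref{coro:HB}. That result is stated for fundamental discriminants, whereas here $d$ ranges over all odd integers; the reduction is the decomposition $d=m^2k$ with $k$ squarefree, which gives
\[
L(s,f\otimes\chi_{8d}) = L(s,f\otimes\chi_{8k})\prod_{p\mid m,\,p\nmid k}\bigl(1-\lam_f(p)\chi_{8k}(p)p^{-s}+\chi_{8k}(p)^2p^{-2s}\bigr),
\]
with each local factor bounded by $O(1)$ on $\operatorname{Re}(s)\geq\half+\varepsilon$. Cauchy--Schwarz and Corollary \ref{coro:HB} applied to the inner squarefree $k$-sum, followed by $\sum_{m\leq\sqrt{X}/a}d(m)^{O(1)}/m^{2+2\varepsilon}<\infty$, then yield
\[
\sum_{\substack{(d,2)=1\\ d\ll X/a^2}}\bigl|L^{(a)}(\half+\varepsilon+it_2,f\otimes\chi_{8d})L^{(a)}(\half+\varepsilon+it_3,f\otimes\chi_{8d})\bigr|\ll (X/a^2)^{1+\varepsilon}(1+|t_2|+|t_3|)^{1+\varepsilon}.
\]
Integrating this against $|H(da^2,s_2,s_3)|$ over $s_2,s_3$ --- the rapid $s_i$-decay of $H$ absorbs the polynomial $t_i$-growth --- and using the hypothesis $U_1U_2\leq X^2$ to absorb $U_1^\varepsilon U_2^\varepsilon$ into $X^\varepsilon$, I obtain $|T(a)|\ll X^{1+\varepsilon}/a^2$, from which the lemma follows. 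The only real technical point is the reduction to Corollary \ref{coro:HB}'s fundamental-discriminant case via $d=m^2k$; once that is noted, the proof is a routine Mellin-plus-large-sieve exercise.
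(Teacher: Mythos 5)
Your argument is correct and uses exactly the same ingredients as the paper's proof: a Mellin inversion in the $n_1,n_2$ variables to produce products of shifted twisted $L$-functions, a decomposition of $d$ into a square times a squarefree part to reduce to fundamental discriminants, and an application of Heath-Brown's quadratic large sieve via Corollary \ref{coro:HB}. The paper merely reorders these steps (writing $d=b^2\ell$ first and grouping by $c=ab$, so that the coprimality restriction and the extra square factor are absorbed into a single Euler-modified $L_c$) and replaces your Cauchy--Schwarz step by the elementary inequality $|L_c(\tfrac12+u)L_c(\tfrac12+v)|\le d(c)^2(|L(\tfrac12+u)|^2+|L(\tfrac12+v)|^2)$, neither of which affects the bound.
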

\begin{proof}  We write $d=b^2 \ell$ where $\ell$ is square-free, and 
group terms according to $c=ab$.  Thus 
\begin{equation}
\label{eq:S21}
 S_2(h) = \sum_{(c,2)=1} \sum_{\substack{a > Y \\ a|c}} \mu(a)  
 \sumstar_{(\ell,2)=1} \sum_{(n_1,c)=1} \sum_{(n_2,c)=1} 
 \frac{\lambda_f(n_1) \lambda_f(n_2) }{\sqrt{n_1 n_2}}
 \chi_{8\ell}(n_1 n_2) h(c^2 \ell, n_1, n_2).
\end{equation}
Consider the sum over $\ell$, $n_1$, and $n_2$ in \eqref{eq:S21}.  
Using Mellin transforms in the variables $n_1$ and $n_2$ we 
see that this sum is 
\begin{equation} 
\label{eq:S22}
\frac{1}{(2\pi i)^2} \int_{(\frac 12+\varepsilon)} \int_{(\frac 12+\varepsilon)} 
\sumstar_{(\ell, 2) =1} {\check h}(c^2 \ell; u, v) \sum_{\substack{n_1, n_2 \\ (n_1n_2, c)=1}} 
\frac{\lam_f(n_1)\lam_f(n_2)}{n_1^{\frac 12+u}n_2^{\frac 12+v} } \chi_{8\ell}(n_1) \chi_{8\ell}(n_2) du \, dv,
\end{equation}
where 
\begin{equation*}
{\check h}(x;u,v) = \int_0^{\infty} \int_0^{\infty} h(x,y,z) y^u z^v \frac{dy}{y} \frac{dz}{z}.
\end{equation*}
Integrating by parts several times, we see that for Re$(u)$, Re$(v) >0$, 
\begin{equation}
\label{eq:3.12}
{\check h}(x;u,v) \ll \left( 1 + \frac{x}{X} \right)^{-100} \frac{U_1^{\text{Re}(u)} U_2^{\text{Re}(v)}}{|uv| (1 + |u|)^{10} (1+ |v|)^{10}}.
\end{equation}
The sum over $n_1$ and $n_2$ in \eqref{eq:S22} equals $L_c(\frac 12+u,f\otimes \chi_{8\ell}) L_c(\frac 12+v,f \otimes \chi_{8\ell})$ where $L_c$ is given by the Euler product defining $L(s,f)$ 
but omitting those primes dividing $c$.   Thus moving the lines of integration in \eqref{eq:S22}
to $\text{Re}(u)=\text{Re}(v)=1/\log X$, and using \eqref{eq:3.12} together 
with 
$$
|L_c(\tfrac 12+u,f\otimes \chi_{8\ell})L_c(\tfrac 12+v,f\otimes \chi_{8\ell})| 
\le d(c)^2 ( |L(\tfrac 12+u,f\otimes \chi_{8\ell})|^2 + |L(\tfrac 12+v,f\otimes\chi_{8\ell})|^2),
$$
we conclude that \eqref{eq:S22} is bounded by 
\begin{equation}
\label{eq:S23}
d(c)^2 (\log X)^2 \int_{-\infty}^{\infty} (1+|t|)^{-10} \sumstar_{(\ell,2)=1} \left(1 + \frac{\ell c^2}{X}\right)^{-100} 
|L(\tfrac 12+\tfrac 1{\log X} +it, f\otimes \chi_{8\ell})|^2  \ dt.
\end{equation}
Now using Corollary \ref{eq:H-B} we conclude that the quantity in \eqref{eq:S22} is 
$\ll d(c)^2 X^{1+\varepsilon}/c^2$, and using this estimate in \eqref{eq:S21} we obtain 
the Lemma.   
\end{proof}

Now let us consider the harder problem of evaluating $S_1(h)$.  
We begin by applying Poisson summation, Lemma \ref{lemma:Poisson}.
Letting $C = \cos$ and $S = \sin$, we get
\begin{multline}
\label{eq:S1}
 S_1(h)= \frac{X}{2} \sum_{\substack{a \leq Y \\ (a,2)=1}} \frac{\mu(a)}{a^2} \sum_{k \in \mz}  
\sum_{(n_1,2a)=1} \sum_{(n_2,2a)=1} \frac{\lambda_f(n_1) \lambda_f(n_2)}{\sqrt{n_1 n_2}} \frac{G_k(n_1 n_2)}{n_1 n_2} 
\\
\times \int_0^{\infty} h(xX, n_1, n_2) (C + S)\leg{2\pi k xX}{2n_1 n_2 a^2} dx.
\end{multline}

\subsection{The main term}
The main contribution to $S_1(h)$ comes from the $k=0$ term in \eqref{eq:S1}, which 
we call $S_{10}(h)$.
Note $G_0(m) = \phi(m)$ if $m = \square$, and is zero otherwise.  Further 
$$
\sum_{\substack{a \leq Y \\ (a,2n_1n_2)=1}} \frac{\mu(a)}{a^2} = \frac{1}{\zeta(2)} 
\prod_{p|2n_1n_2} \left( 1-\frac{1}{p^2}\right)^{-1} +O(Y^{-1}) = \frac{8}{\pi^2}\prod_{p|n_1n_2} \left(1-\frac{1}{p^2}\right)^{-1}+ O(Y^{-1}) .
$$
Hence, setting $h_1(y,z) = \int_0^{\infty} h(xX,y,z) dx$ we obtain that 
\begin{multline*}
 S_{10}(h) =  \frac{4X}{\pi^2} 
\sum_{\substack{(n_1 n_2,2)=1 \\ n_1 n_2 = \square}} \frac{\lambda_f(n_1) \lambda_f(n_2)}{\sqrt{n_1 n_2}} \prod_{p|n_1n_2} \left( \frac{p}{p+1}\right) 
h_1\left( n_1, n_2\right)
\\
 + O \Big( \frac XY \sum_{\substack{(n_1 n_2,2)=1 \\ n_1 n_2 = \square}} 
\frac{d(n_1) d(n_2)}{\sqrt{n_1 n_2}} |h_1(n_1,n_2)|\Big).
\end{multline*}
Using the bounds for $h$ assumed in Proposition \ref{mainprop} (which basically restrict $n_1$ 
to be of size $U_1$ and $n_2$ to be of size $U_2$) we find that the error term above 
is $\ll X (\log X)^{11}/Y$ so that 
\begin{equation}
\label{eq:mainterm} 
S_{10}(h) =  \frac{4X}{\pi^2} 
\sum_{\substack{(n_1 n_2,2)=1 \\ n_1 n_2 = \square}} \frac{\lambda_f(n_1) \lambda_f(n_2)}{\sqrt{n_1 n_2}} \prod_{p|n_1n_2} \left( \frac{p}{p+1}\right) 
h_1\left( n_1, n_2\right) + O\left( \frac XY (\log X)^{11} \right). 
\end{equation}

\subsection{The $k \neq 0$ terms}
\label{section:3.3}
We now estimate the contribution to $S_1(h)$ from the terms  $k \neq 0$ in \eqref{eq:S1}; call 
this contribution $S_3(h)$. 

We first  express the weight function appearing in \eqref{eq:S1} in a form 
more suitable for Mellin transforms.  
Suppose $f$ is a smooth function on $\mr_+$ with 
rapid decay at infinity, and such that $f$ and all its derivatives have a finite limit as $x\to 0^+$.  
Consider the Fourier-like transform 
\begin{equation*}
 \widehat{f}_{CS}(y) := \int_0^{\infty} f(x) CS(2\pi xy) dx,
\end{equation*}
where $CS$ stands for either $\cos$ or $\sin$.  By Mellin inversion, we get 
\begin{eqnarray*}
 \widehat{f}_{CS}(y)  &=& \int_0^{\infty}CS(2\pi xy) \frac{1}{2\pi i} \int_{(-\half)} \widetilde{f}(1+s) x^{-s} ds \frac{dx}{x}\\
&=& \int_0^{\infty}CS(\text{sgn}(y) x) \frac{1}{2\pi i} \int_{(\half)} \widetilde{f}(1-s) (x/2\pi |y|)^{s} ds \frac{dx}{x}.
\end{eqnarray*}
Reversing the orders of integration and using (17.43.3, 17.43.4) of \cite{GR}, the above simplifies to
\begin{equation*}
 \widehat{f}_{CS}(y)  = \frac{1}{2\pi i} \int_{(\half)} \widetilde{f}(1-s) \Gamma(s) CS\left(\frac{\text{sgn}(y) \pi s}{2}\right) (2\pi |y|)^{-s} ds.
\end{equation*}
One can rigorously justify the interchange of integrations by splitting the $x$-integral into $x \leq Z$ (with $Z$ large) and $x > Z$; one treats the integral with $x > Z$ by a contour shift, while the $x \leq Z$ integral gets interchanged with the $s$-integral, and then extended to all $x \geq 0$ by integration by parts.

Applying this formula, we have 
\begin{multline}
\label{eq:3.30}
 \int_0^{\infty} h \left(Xx, n_1, n_2 \right) (C + S)\leg{2\pi k xX}{2n_1 n_2 a^2} dx 
\\
=  \frac{X^{-1}}{2\pi i}  \int_{(\varepsilon)} \check{h}\left(1-s; n_1, n_2 \right) \leg{n_1 n_2 a^2}{\pi |k| }^{s} \Gamma(s) (C + \text{sgn}(k)S)\left(\frac{\pi s}{2} \right) ds, 
\end{multline}
where
\begin{equation*}
\label{eq:3.31}
 \check{h}(s;y,z) = \int_0^{\infty} h(x,y,z) x^s \frac{dx}{x}. 
\end{equation*}
Taking the Mellin transforms in the other variables on the second line of \eqref{eq:3.30}, we get
\begin{equation*}
\frac 1X \leg{1}{2\pi i}^3  \int_{(\varepsilon)} \int_{(\varepsilon)} \int_{(\varepsilon)} \widetilde{h}\left(1-s,u,v \right) \frac{1}{n_1^{u} n_2^{v}} \leg{n_1 n_2 a^2}{\pi |k| }^{s} \Gamma(s) (C + \text{sgn}(k)S)\left(\frac{\pi s}{2} \right) ds\, du \, dv,
\end{equation*}
where 
\begin{equation*}
\widetilde{h}(s,u,v) = \int_{\mr_{+}^{3}} h(x,y,z) x^{s} y^{u} z^{v} \frac{dx}{x} \frac{dy}{y} \frac{ dz}{ z}.
\end{equation*}
Integrating by parts several times we find that for Re$(u)$, Re$(v) >0$ we have 
\begin{equation} 
\label{eq:h}
|\widetilde {h}(s,u,v)| \ll \frac{X^{\text{\rm Re}(s)} U_1^{\text{\rm Re}(u)} U_2^{\text{\rm Re}(v)}}{|uv| (1+|s|)^{98} (1+|u|)^{98} (1 + |v|)^{98}}.
\end{equation}

Using this expression in \eqref{eq:S1}, and since $G_k(m) =G_{4k}(m)$ for odd $m$, 
we find that
\begin{multline}
\label{eq:S31}
 S_3(h) = \frac{1}{2} \sum_{\substack{a \leq Y \\ (a,2)=1}} \frac{\mu(a)}{a^2} \sum_{k \neq 0}  
\sum_{(n_1,2a)=1} \sum_{(n_2,2a)=1} \frac{\lambda_f(n_1) \lambda_f(n_2)}{\sqrt{n_1 n_2}} 
\frac{G_{4k}(n_1 n_2)}{n_1 n_2} 
\\
\leg{1}{2\pi i}^3  \int_{(\varepsilon)} \int_{(\varepsilon)} \int_{(\varepsilon)} \widetilde{h}\left(1-s,u,v \right) \frac{1}{n_1^{u} n_2^{v}} \leg{n_1 n_2 a^2}{\pi |k|  }^{s} \Gamma(s) (C + \text{sgn}(k)S)\left(\frac{\pi s}{2} \right) ds \, du \, dv.
\end{multline}
We write $4k=k_1k_2^2$ where $k_1$ is a fundamental discriminant, and $k_2$ is 
positive, so that the sum over $k$ above is a sum over fundamental discriminants $k_1$ and 
positive integers $k_2$.    We consider the sum over $k_2$, $n_1$ and $n_2$ in \eqref{eq:S31} 
above.  Note that the integrals in \eqref{eq:S31} may be taken over any vertical 
lines with real part between $0$ and $1$.   
Consider 
\begin{equation}
\label{eq:Z}
 Z(\alpha,\beta,\gamma;q,k_1) = \sum_{k_2=1}^{\infty} \sum_{(n_1,2q)=1} \sum_{(n_2,2q)=1} \frac{\lambda_f(n_1) \lambda_f(n_2)}{n_1^{\alpha} n_2^{\beta} |k_2|^{2\gamma}} \frac{G_{k_1 k_2^2}(n_1 n_2)}{n_1 n_2}, 
\end{equation}
which converges absolutely if Re$(\alpha)$, Re$(\beta)$, and Re$(\gamma)$ are 
all $> \frac 12$.  Therefore taking the integrals in \eqref{eq:S31} to be on the lines 
Re$(s)= \frac 12+ \varepsilon$ and ${\text {Re}}(u)={\text {Re}}(v) = \frac 12+\varepsilon$ 
we find that 
\begin{multline*}
S_3(h)  = \frac 12 \sum_{\substack{a \leq Y \\ (a,2)=1}} \frac{\mu(a)}{a^2} 
 \sumflat_{k_1} \left(\frac{1}{2\pi i}\right)^3  \int_{(\frac 12+2\varepsilon)} \int_{(\frac 12+\varepsilon)} 
\int_{(\frac 12+\varepsilon)} {\widetilde h}(1-s,u,v)  \Gamma(s)
\\
\hskip 1 in \times   (C+ {\text {sgn}}(k_1) S) \left( \frac{\pi s}{2}\right)
 \left(\frac{a^2}{\pi |k_1|}\right)^s 
Z(\tfrac 12+u-s, \tfrac 12+v-s, s;a,k_1) ds \, du \, dv.
\end{multline*}
Changing variables we conclude that 
  \begin{multline}
  \label{eq:S32}
S_3(h)  =\frac 12 \sum_{\substack{a \leq Y \\ (a,2)=1}} \frac{\mu(a)}{a^2} 
 \sumflat_{k_1} \left(\frac{1}{2\pi i}\right)^3  \int_{(\varepsilon)} \int_{(\varepsilon)} 
\int_{(\frac 12+\varepsilon)} {\widetilde h}(1-s,u+s,v+s)  \Gamma(s)
\\
\hskip 1in \times   (C+ {\text {sgn}}(k_1) S) \left( \frac{\pi s}{2}\right)
 \left(\frac{a^2}{\pi |k_1|}\right)^s 
Z(\tfrac 12+u, \tfrac 12+v, s;a,k_1) ds \, du \, dv.
\end{multline}
To proceed further we require an analysis of the function $Z$.   

\begin{mylemma}
\label{lemma:Z}
 The function $Z(\alpha,\beta,\gamma;q,k_1)$ defined above may be written 
 as 
 \begin{multline*}
 \frac{L_q(\frac 12+\alpha, f\otimes \chi_{k_1})L(\frac 12+\beta,f\otimes \chi_{k_1})}{\zeta_q(1+\alpha+\beta) L_q(1+2\alpha, \text{\rm sym}^2 f)L_q(1+\alpha+\beta, \text{\rm sym}^2 f) L_q(1+2\beta, \text{\rm sym}^2 f)} Z_2(\alpha,\beta,\gamma;q, k_1),
\end{multline*}
where $Z_2(\alpha,\beta,\gamma;q,k_1)$ is a function uniformly bounded in the region
$\text{\rm Re}(\gamma) \ge \frac 12+\varepsilon$, and $\text{\rm Re}(\alpha), \text{\rm Re}(\beta) \geq 0$.
\end{mylemma}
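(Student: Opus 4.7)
The strategy is to express $Z(\alpha,\beta,\gamma;q,k_1)$ as an absolutely convergent Euler product $\prod_{p \nmid 2q} Z_p$, match the leading Euler factor at each prime with that of the predicted main term $M_p$, and show that the residual product $Z_2 := \prod_{p \nmid 2q} Z_p / M_p$ converges absolutely in the stated region. Multiplicativity of $\lambda_f$ and of $G_k(\cdot)$ from Lemma \ref{lemma:Gk} delivers the Euler factorisation, provided one checks that the local factor at $p$ depends only on local data. Writing $\epsilon_p := v_p(k_1) \in \{0,1\}$ and $v_p(k_2) = c$, the value $G_{k_1 k_2^2}(p^{a+b})$ depends (only in the borderline case $a+b = \epsilon_p + 2c + 1$ odd) on the residue of $(k_1 k_2^2)/p^{\epsilon_p + 2c}$ mod $p$; but that residue is $(k_1 / p^{\epsilon_p})$ times a product of squares contributed by the primes $\ell \neq p$ dividing $k_2$, so its Legendre symbol equals $\leg{k_1/p^{\epsilon_p}}{p}$, independent of the rest of $k_2$. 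This justifies the factorisation
\[
Z_p = \sum_{a,b,c \geq 0} \frac{\lambda_f(p^a)\lambda_f(p^b)\, G_{p^{\epsilon_p+2c}}(p^{a+b})}{p^{a(1+\alpha)+b(1+\beta)+2c\gamma}}.
\]

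Next I would evaluate $Z_p$ in closed form via the case analysis of Lemma \ref{lemma:Gk}: for $p \nmid k_1$ only the contributions with $a+b \leq 2c$ even (giving $\phi(p^{a+b})$) and $a+b = 2c+1$ (giving $\leg{k_1}{p} p^{2c+1/2}$) survive, while for $p \mid k_1$ only $a+b \leq 2c$ even (giving $\phi(p^{a+b})$) and $a+b = 2c+2$ (giving $-p^{2c+1}$) contribute. Summing over $c$ produces a geometric series in $p^{-2\gamma}$, and with the Satake parameters $\alpha_p\beta_p = 1$ and $\lambda_f(p^a) = \sum_{j=0}^{a}\alpha_p^{a-2j}$ the remaining double sum over $a,b$ with parity constraint $2 \mid a+b$ becomes an explicit rational function in $p^{-1/2-\alpha}$, $p^{-1/2-\beta}$, $p^{-\gamma}$.

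The third step is the direct comparison of $Z_p$ with
\[
M_p := \frac{\mathcal{L}_p(\tfrac12+\alpha, f \otimes \chi_{k_1})\,\mathcal{L}_p(\tfrac12+\beta, f \otimes \chi_{k_1})}{\mathcal{Z}_p(1+\alpha+\beta)\,\mathcal{L}_p(1+2\alpha, \text{\rm sym}^2 f)\,\mathcal{L}_p(1+\alpha+\beta, \text{\rm sym}^2 f)\,\mathcal{L}_p(1+2\beta, \text{\rm sym}^2 f)},
\]
where $\mathcal{L}_p$ and $\mathcal{Z}_p$ denote the local Euler factors. Using the Hecke relations $\lambda_f(p^a)\lambda_f(p^b) = \sum_{j=0}^{\min(a,b)} \lambda_f(p^{a+b-2j})$ and the local Rankin-Selberg factorisation of $\sum_{a,b \geq 0,\, 2 \mid a+b} \lambda_f(p^a)\lambda_f(p^b) X^a Y^b$, one verifies $Z_p = M_p(1 + O(p^{-1-\varepsilon}))$ uniformly for $\text{\rm Re}(\gamma) \geq \tfrac12 + \varepsilon$ and $\text{\rm Re}(\alpha), \text{\rm Re}(\beta) \geq 0$. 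Absolute convergence and uniform boundedness of $Z_2$ in the stated region then follow immediately.

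The main obstacle is this third step: correctly accounting for the \emph{three} $\text{\rm sym}^2$ denominator factors at the shifts $1+2\alpha$, $1+\alpha+\beta$, $1+2\beta$ together with the single $\zeta$-factor at $1+\alpha+\beta$, rather than, say, two or four. These all arise from the parity filter $2 \mid a+b$ enforced by the vanishing of $G_k(p^N)$ at odd $N \leq v_p(k)$; projecting the local symbol $\sum_{a,b} \lambda_f(p^a)\lambda_f(p^b) X^a Y^b$ onto its even-parity component yields a rational function whose denominator matches, after the substitutions $X = p^{-1/2-\alpha}$, $Y = p^{-1/2-\beta}$ and the $c$-sum, exactly the polar structure of $M_p$. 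The verification is elementary but lengthy Euler-factor bookkeeping.
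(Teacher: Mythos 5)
Your proposal follows essentially the same path as the paper: Euler-factorize $Z$ via the joint multiplicativity in Lemma~\ref{lemma:Gk}, classify the surviving Gauss-sum values prime-by-prime, and verify that the local factor of $Z$ agrees with the local factor of the prescribed $L$-function quotient up to $1 + O(p^{-1-\varepsilon})$, uniformly in the stated region. The paper actually takes a shortcut at the last step, computing only the leading terms of each local factor (namely $1 + \chi_{k_1}(p)\lambda_f(p)(p^{-1/2-\alpha}+p^{-1/2-\beta}) + O(p^{-1-2\varepsilon})$ in the generic case and the analogous expansion for $p \mid k_1$) rather than closing the $a,b,c$-sum in rational form as you suggest, so your version is if anything more work than necessary; and note that your Euler product $\prod_{p\nmid 2q}$ omits the primes $p \mid 2q$, at which $Z$ still picks up a factor $(1-p^{-2\gamma})^{-1} = 1 + O(p^{-1-2\varepsilon})$ from the unrestricted $k_2$-sum (the paper treats this case explicitly), though this changes nothing in the conclusion.
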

\begin{proof} 
Inspecting Lemma \ref{lemma:Gk}, we see that the summand of \eqref{eq:Z} is jointly multiplicative in terms of $n_1, n_2$, and $k_2$, so that $Z(\alpha,\beta,\gamma;q, k_1)$ may be 
expressed as a product over all primes $p$.  We must compute the contribution of such an 
Euler factor at $p$.  

Consider first the generic case when 
$p \nmid 2q k_1$.  The contribution of such an Euler factor is 
\begin{equation*}
 \sum_{k_2, n_1, n_2} \frac{\lambda_f(p^{n_1})\lambda_f(p^{n_2})}{p^{n_1 \alpha + n_2 \beta +2k_2 \gamma}}\frac{ G_{k_1 p^{2k_2}}(p^{n_1 + n_2})}{p^{n_1+n_2}  }.
\end{equation*}
In the region Re$(\gamma) \ge \frac 12+\varepsilon$, Re$(\alpha)$, Re$(\beta)\ge 0$  
we check using Lemma \ref{lemma:Gk} that the terms $k_2\ge 1$ contribute terms of size 
$\ll 1/p^{1+2\varepsilon}$.  This leaves the contribution of the 
term $k_2=0$ which is $1+ \chi_{k_1}(p) \lam_f(p) (p^{-\frac 12-\alpha}+p^{-\frac 12-\beta})$.   From 
this calculation we see that this Euler factor for $Z$ matches the corresponding 
Euler factor in the alternative expression given in our Lemma.

Next consider the case $p | k_1$ but  $p \nmid 2q$.  Using Lemma \ref{lemma:Gk} we find that in the region 
Re$(\gamma)\ge \frac 12+\varepsilon$ and Re$(\alpha)$, Re$(\beta)\ge 0$ we have that 
this Euler factor equals 
$$ 
1- \lam_f(p^2) \left( \frac{1}{p^{1+2\alpha}} +\frac{1}{p^{1+\alpha+\beta}} + \frac{1}{p^{1+2\beta}} \right) 
- \frac{1}{p^{1+\alpha+\beta}} + O\left( \frac{1}{p^{1+\varepsilon}}\right). 
$$
Again this matches the corresponding Euler factor prescribed in our Lemma. 


Finally, if $p|2q$ the corresponding Euler factor is $(1-p^{-2\gamma})^{-1}= (1+O(1/p^{1+2\varepsilon})$.   
With these computations we have verified the Lemma.  
\end{proof}

With this information about $Z$ at hand, we return to \eqref{eq:S32}.  We split that sum into 
two terms based on whether $|k_1| \le U_1 U_2 Y^2/X$, or not.   
For the first category of terms we move the lines of integration to Re$(s)= \frac 34$, 
Re$(u)=\text{Re}(v)=-\frac 12+\frac{1}{\log X}$, and for the second category we move 
the lines of integration to Re$(s)=\frac 54$, Re$(u)=\text{Re}(v)=-\frac 12+\frac{1}{\log X}$.  
In either case we find by Lemma \ref{lemma:Z} that 
$$
Z(\tfrac 12+u,\tfrac 12+v,s;a,k_1) 
\ll |L_a(1+u,f\otimes \chi_{k_1}) L_a(1+v,f\otimes \chi_{k_1})| (\log X)^4 
$$
which is 
\begin{equation}
\label{eq:Zbound}
\ll (\log X)^4 \prod_{p|a} \left( 1+ \frac{10}{\sqrt{p}}\right) \left(|L(1+u,f\otimes \chi_{k_1})|^2 + 
|L(1+v,f\otimes \chi_{k_1})|^2\right).
\end{equation}

Using \eqref{eq:h}, \eqref{eq:Zbound}, that $|\Gamma(s) (C+\text{sgn}(k_1)S)(\pi s/2)| \ll |s|^{\text{Re}(s)-\frac 12}$, and the symmetry in $u$ and $v$
we find that our first category of terms 
contributes 
\begin{multline}
\label{eq:firstbd}
\ll (XU_1 U_2)^{\frac 14} (\log X)^4 \sum_{a\le Y} \frac{1}{\sqrt{a}} \prod_{p|a} \left(1+\frac{10}{\sqrt{p}}\right)  \int_{(\frac 34)} \int_{(-\frac 12+\frac{1}{\log X})}\int_{(-\frac 12+\frac{1}{\log X})} \\ 
\times \sumflat_{|k_1|\le U_1 U_2 Y^2/X}\frac{1}{|k_1|^{\frac 34}} |L(1+u,f\otimes \chi_{k_1})|^2 \frac{ du \, dv \, ds}{(1+|s|)^{50}(1+|u+s|)^{50}(1+|v+s|)^{50}}.
\end{multline}
Using Corollary \ref{eq:H-B} we conclude that the above is $\ll (U_1 U_2)^{\frac 12}Y X^{\varepsilon}$.

Similarly the contribution of the second category of terms is 
\begin{multline} 
\label{eq:secbd}
\ll (U_1 U_2)^{\frac 34} X^{-\frac 14} (\log X)^4 \sum_{a\le Y} \sqrt{a} \prod_{p|a} \left(1+\frac{10}{\sqrt{p}}\right)  \int_{(\frac 54)} \int_{(-\frac 12+\frac{1}{\log X})}\int_{(-\frac 12+\frac{1}{\log X})} \\ 
\times \sumflat_{|k_1|> U_1 U_2 Y^2/X}\frac{1}{|k_1|^{\frac 54}} |L(1+u,f\otimes \chi_{k_1})|^2 \frac{ du \, dv \, ds}{(1+|s|)^{50}(1+|u+s|)^{50}(1+|v+s|)^{50}}.
\end{multline}
Using 
Corollary \ref{eq:H-B} again we see that the above is once again 
$\ll (U_1 U_2)^{\frac 12} Y X^{\varepsilon}$.  

The proof of Proposition \ref{mainprop} follows upon 
combining the work of the previous sections, choosing $Y= X^{\frac 12} (U_1 U_2)^{-\frac 14}$.  


\section{ The lower bound: Proof of Theorem \ref{thm:lowerbound}}
\label{section:lowerbound}

 
\noindent  Let $F$ be a smooth, nonnegative, compactly supported function on $\mr_{+}$.  
 Define
\begin{equation*}
 \mathcal{A}_U(\tfrac12;8d) = 2 \sum_{n=1}^{\infty} \frac{\lambda_f(n) \chi_{8d}(n)}{\sqrt{n}} W\leg{n}{U},
\end{equation*}
where $W(x) = W_{1/2}(x)$, and $U\le X$ is a parameter that we shall choose shortly.
Since $F$ is nonnegative, by Cauchy's inequality we have that 
\begin{equation}
\label{eq:lower}
\sumstar_{(d,2)=1} L(\tfrac12, f \otimes \chi_{8d})^2 F \left( \frac {8d}{X} \right)\geq  \frac{\left(\sumstar_{(d,2)=1} L(\tfrac12, f \otimes \chi_{8d}) \mathcal{A}_U(\tfrac12;8d) F\leg{8d}{X} \right)^2}{\sumstar_{(d,2)=1} \mathcal{A}_U(\tfrac12;8d)^2 F\leg{8d}{X}}.
\end{equation}
Write the right hand side above as $(4A)^2/4B$, say.  Using Proposition \ref{mainprop} 
we shall be able to evaluate $A$ and $B$ asymptotically in the range $U\le X^{1-\varepsilon}$.  
If we choose $U = X^{1-\varepsilon}$ then both $4A$ and $4B$ will be close 
to the expected asymptotic for the second moment, giving the lower bound of Theorem \ref{thm:lowerbound}.  
A similar truncation argument appeared in \cite{Sfourth}.


Both $A$ and $B$ may be written in  a form suitable for applying Proposition \ref{mainprop}. 
For example, we have 
\begin{equation}
\label{eq:4.2}
A= \sumstar_{(d,2)=1} \sum_{n_1} \sum_{n_2} \frac{\lambda_f(n_1) \lambda_f(n_2) \chi_{8d}(n_1 n_2)}{\sqrt{n_1 n_2}} h(d, n_1, n_2),
\end{equation}
where 
$h(x,y,z) = F(8x/X) W(y/U) W(z/8x)$, which satisfies the hypothesis in Proposition 
3.1 with $X=X$, $U_1=U$ and $U_2=X$.   A similar expression holds 
for $B$ with $h(x,y,z) = F(8x/X) W(y/U) W(z/U)$ meeting the condition in Proposition \ref{mainprop} 
with $X=X$, $U_1=U_2=U$.   

If $U\le X^{1-\varepsilon}$ then applying Proposition \ref{mainprop} in \eqref{eq:4.2} 
we find that 
\begin{equation}
\label{eqn:4.3}
A = \frac{4X}{\pi^2} \sum_{\substack {(n_1n_2,2)=1 \\ n_1 n_2= \square}} \frac{\lam_f(n_1) \lam_f(n_2)}{\sqrt{n_1n_2}} \prod_{p|n_1 n_2} \left(\frac{p}{p+1} \right) \int_0^{\infty} h(xX,n_1,n_2) dx + o(X). 
\end{equation} 
Using the definition of $W(x)$ we obtain that 
$$ 
\int_0^\infty h(xX,n_1,n_2) dx = 
\frac{1}{(2\pi i)^2} \int_{(1)} \int_{(1)} \frac{g(u)g(v)}{uv} \frac{U^u X^v}{n_1^u n_2^v} \frac {{\tilde F}(1+v)}{8} du \, dv, 
$$ 
where ${\tilde F}(1+v) = \int_0^{\infty} x^{v} F(x) dx$.  Using this in \eqref{eqn:4.3}  
and setting 
$$ 
Z(u,v)  = \sum_{\substack{(n_1 n_2,2)=1 \\ n_1 n_2 = \square}} \frac{\lambda_f(n_1) \lambda_f(n_2)}{n_1^{\frac12 + u} n_2^{\frac12 + v}}\prod_{p|n_1 n_2} \left( \frac{p}{p+1}\right),
$$ 
we conclude that 
\begin{equation}
\label{eqn:4.4}  
A = \frac{X}{2\pi^2} \frac{1}{(2\pi i)^2} \int_{(1)} \int_{(1)} \frac{g(u)g(v)}{uv} U^u X^v {\tilde F}(1+v) Z(u,v) 
dv \ du + o(X).
\end{equation}

 A simple calculation shows that $Z(u,v)$ equals 
 \begin{multline}
 \label{eq:Z(u,v)}
  \prod_{p >2} \left( 1+ \frac{p}{p+1} \left(\frac{1}{2} \left(1-\frac{\lam_f(p)}{p^{\frac 12+u}}+ \frac{1}{p^{1+2u}}\right)^{-1} \left( 1-\frac{\lam_f(p)}{p^{\frac 12+v}} +\frac{1}{p^{1+2v}}\right)^{-1} \right. \right. \\ 
  \left. \left. + 
 \frac12 \left(1+\frac{\lam_f(p)}{p^{\frac 12+u}}+\frac{1}{p^{1+2u}} \right)^{-1} 
 \left( 1+ \frac{\lam_f(p)}{p^{\frac 12+v}} +\frac{1}{p^{1+2v}} \right)^{-1} -1 \right) \right).
 \end{multline} 
The Euler product above converges absolutely when Re$(u)$ and Re$(v)$ are positive.  
We write 
\begin{equation}
\label{eq:Z(u,v)2} 
\zeta(1+u+v) L(1+2u, \text{sym}^2(f)) L(1+2v, \text{sym}^2(f)) L(1+u+v, \text{sym}^2(f))  Z_2(u,v),
\end{equation}
where $Z_2(u,v)$ converges absolutely in the region Re$(u)$ and Re$(v)$ larger than $-\frac 14+\varepsilon$, and is uniformly bounded there.
 
 We now use these observations to evaluate the double integral in \eqref{eqn:4.4}.  First 
 we move the integrals there to Re$(u)= \text{Re}(v) =\frac{1}{10}$; no poles are 
 encountered in this shift.  Then we move the line of integration in $v$ to Re$(v) = -\frac 15$.   
 In doing so we encounter simple poles at $v=0$ and $v=-u$ whose residues we next calculate;  
 the integrals on  Re$(u)=\frac{1}{10}$, Re$(v)=-\frac 15$ are easily seen to be $O(X^{-\frac 1{10}+\varepsilon})$.  The contribution from the residue at $v=-u$ is 
 $$
 \frac{1}{2\pi i} \int_{(\frac 1{10})} \frac{g(u)g(-u)}{-u^2} U^u X^{-u} L(1+2u, \text{sym}^2 f)
 L(1-2u, \text{sym}^2 f) L(1, \text{sym}^2 f) Z_2(u,-u) {\tilde F}(1-u) du
 $$ 
which is $O(1)$ since $U \le X$.  Finally consider the contribution of the 
residue at $v=0$, namely
$$ 
\frac{1}{2\pi i}\int_{(\frac 1{10})} \frac{g(u)}{u}  U^u \zeta(1+u)L(1+2u,\text{sym}^2 f) L(1+u,\text{sym}^2 f) 
L(1,\text{sym}^2 f) Z_2(u,0) {\tilde F}(1)du.
$$
 We now move the line of integration in $u$ to Re$(u)=-\frac 15$, encountering 
a double pole at $u=0$, and  
the integral on the $-\frac 15$ line contributes $\ll U^{-\frac 15}X^{\varepsilon}$.  
The residue of the double pole at $u=0$ is easily seen to be 
$$ 
{\tilde F}(1) L(1,\text{sym}^2 f)^3 Z_2(0,0) \log U + O(1). 
$$  

Using these observations in \eqref{eqn:4.4} 
we conclude that 
\begin{equation}
\label{Aequals}
A = \frac{X}{2\pi^2} \left( {\tilde F}(1) L(1,\text{sym}^2 f)^3 Z_2(0,0) \log U + O(1)\right). 
\end{equation} 
A similar argument shows that the same asymptotic holds for $B$.  
Choosing $U=X^{1-\varepsilon}$, and letting $F$ approximate the indicator 
function of $[0,1]$, the inequality \eqref{eq:lower} gives Theorem \ref{thm:lowerbound}.

\section{The asymptotic on GRH: Proof of Theorem \ref{thm:GRHasymp}}
\label{section:asymponGRH}

\noindent In this section we prove Theorem \ref{thm:GRHasymp}.   The key ingredient in the 
proof is the following  upper bound on shifted moments whose proof we postpone to the 
next section.  

\begin{mycoro}
\label{shiftbounds}  Assume  GRH for the family of quadratic twists of $f$, and 
the for the Riemann zeta-function, and for the symmetric square $L$-function 
$L(s,\text{\rm sym}^2 f)$ .   Let $t_1$ and $t_2$ be real numbers 
with $|t_1|, |t_2|\le X$ and let $\tfrac 12 \le \sigma \le \frac 12+ \frac{1}{\log X}$.  
Then 
$$
\sumflat_{|d|\le x} |L(\sigma+it_1,f\otimes \chi_d) L(\sigma+it_2,f\otimes \chi_d)| 
\ll X(\log X)^{\frac 12+\varepsilon} \left(1+   \min( (\log X)^{\frac 12}, |t_1-t_2|^{-\frac 12})\right).
$$
\end{mycoro}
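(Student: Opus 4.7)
The plan is to adapt Soundararajan's GRH-conditional moment method from \cite{SoundMoment}, together with the shift-handling ideas of Chandee \cite{Chandee}, to the family of quadratic twists. Under GRH for $L(s, f\otimes\chi_d)$, the starting point is the pointwise inequality, valid for any $x \ge 2$ and $\tfrac12 \le \sigma \le \tfrac12 + \tfrac{1}{\log X}$,
$$ \log|L(\sigma+it, f\otimes\chi_d)| \le \mathrm{Re}\sum_{p \le x}\frac{\lambda_f(p)\chi_d(p)}{p^{\sigma+it}}\,\frac{\log(x/p)}{\log x} + O\!\left(\frac{\log X}{\log x}\right) + O(1), $$
with the prime-power contribution (for $p^k$, $k\ge 2$) bounded under GRH for $L(s, \mathrm{sym}^2 f)$. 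Applying this for $t = t_1$ and $t = t_2$, summing and exponentiating (with $x = (\log X)^A$ for a sufficiently large constant $A$) yields $|L(\sigma+it_1,f\otimes\chi_d)L(\sigma+it_2,f\otimes\chi_d)| \ll (\log X)^{\varepsilon}\exp(\mathrm{Re}\,\Phi(d))$, where
$$ \Phi(d) = \sum_{p \le x}\frac{\lambda_f(p)\chi_d(p)(p^{-it_1}+p^{-it_2})}{p^\sigma}\,\frac{\log(x/p)}{\log x}. $$

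The next step is to estimate $\sumflat_{|d|\le X}\exp(\mathrm{Re}\,\Phi(d))$ by Taylor expansion and a large-deviation-style moment argument. Using the quadratic character orthogonality in the form $\sumflat_{|d|\le X}\chi_d(n) = c\,X\,\mathbf{1}_{n=\square} + O(\sqrt{n}X^\varepsilon)$, the $2k$-th moment of $\mathrm{Re}\,\Phi$ is controlled (for $2k$ up to $\asymp \log X/\log\log X$, so that the off-diagonal error remains negligible) by the diagonal pairings of primes:
$$ \sumflat_{|d|\le X}(\mathrm{Re}\,\Phi(d))^{2k} \ll X\cdot\frac{(2k)!}{k!\,2^k}\,V^k, $$
where $V = \sum_{p \le x}\lambda_f(p)^2(\cos(t_1\log p)+\cos(t_2\log p))^2 p^{-2\sigma}\left(\log(x/p)/\log x\right)^2$. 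Summing the Taylor series then gives $\sumflat_{|d|\le X}\exp(\mathrm{Re}\,\Phi(d)) \ll X\exp(V/2)(\log X)^{\varepsilon}$, so everything reduces to estimating $V$.

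Expanding $(\cos a + \cos b)^2 = 1 + \tfrac12(\cos 2a + \cos 2b) + \cos(a-b) + \cos(a+b)$ and using $\lambda_f(p)^2 = \lambda_{\mathrm{sym}^2 f}(p) + 1$ together with the Rankin--Selberg identity $\sum_{p \le x}\lambda_f(p)^2/p^{2\sigma} = \log\log X + O(1)$, the constant piece of $V$ contributes $\log\log X + O(1)$; the oscillating pieces $\cos(2t_i \log p)$ and $\cos((t_1+t_2)\log p)$ sum to $O(1)$ under GRH (their imaginary shifts are bounded away from $0$ for the generic configurations in the theorem); and the remaining $\cos((t_1-t_2)\log p)$ piece equals $\mathrm{Re}\log\zeta(2\sigma+i(t_1-t_2)) + \mathrm{Re}\log L(2\sigma+i(t_1-t_2), \mathrm{sym}^2 f) + O(1)$. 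Invoking Soundararajan's GRH bound on $\log|L|$ near the line $\mathrm{Re}(s) = 1$ applied to $\zeta$ and $L(s, \mathrm{sym}^2 f)$, this last quantity is $\le \log(1/|t_1-t_2|) + O(\log\log\log X)$ when $|t_1-t_2| \ge 1/\log X$, and $\le \log\log X + O(1)$ otherwise. Combining, $V \le \log\log X + \min(\log\log X, \log(1/|t_1-t_2|)) + O(\log\log\log X)$, and exponentiating gives
$$ e^{V/2} \ll (\log X)^{\frac12+\varepsilon}\bigl(1 + \min((\log X)^{\frac12}, |t_1-t_2|^{-\frac12})\bigr), $$
as claimed.

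The main obstacle is the recursive use of GRH-type bounds for $\log|\zeta(s)|$ and $\log|L(s,\mathrm{sym}^2 f)|$ at $s = 2\sigma + i(t_1-t_2)$ near $\mathrm{Re}(s) = 1$, with the required uniformity in the shift $|t_1-t_2|$ throughout the full range permitted by the hypothesis $|t_1|, |t_2| \le X$, and particularly near the transition $|t_1-t_2| \sim 1/\log X$ at which the two regimes of the minimum meet. The control of the off-diagonal quadratic-character-orthogonality error through the Taylor expansion, keeping it below the main diagonal term for all $2k \lesssim \log X/\log\log X$, is also technically delicate but follows the template of \cite{SoundMoment, Chandee}.
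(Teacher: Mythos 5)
Your plan shares the paper's starting point (the GRH-conditional pointwise inequality for $\log|L|$ and a moment estimate for the resulting Dirichlet polynomial over primes), and your identification of the variance $V$ with $\mathcal{V}(z_1,z_2,X)$ and the handling of the $\cos(2t_i\log p)$, $\cos((t_1\pm t_2)\log p)$ pieces via GRH for $\zeta$ and $L(s,\mathrm{sym}^2 f)$ is in the spirit of what the paper does. But the analytic core of your argument has a fatal gap.

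You take $x = (\log X)^A$ with $A$ a fixed constant in the pointwise inequality. The error term there is $O(\log X/\log x) = O(\log X / (A\log\log X))$, and after exponentiating this is $\exp(c\log X/\log\log X)$ --- a quantity growing faster than any power of $\log X$, not $(\log X)^\varepsilon$ as you claim. To make that error acceptable you must take $x$ to be a small power of $X$. But then Lemma \ref{lemma:quadDirichletpol}-type moment bounds only hold for a bounded range of $k$ (the constraint $x^k \le X^{1/2}/\log X$ forces $k$ to be bounded when $x = X^{\delta}$), and you cannot ``sum the Taylor series'' for $\exp(\mathrm{Re}\,\Phi(d))$ with moment bounds for only finitely many $k$. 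This tension --- $x$ large forces $k$ small, $x$ small makes the GRH error explode --- is exactly the obstacle the paper's method is designed to resolve, and ``just Taylor-expand and sum'' does not resolve it.

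The paper's route (Proposition \ref{prop:measure}) instead bounds the \emph{frequency} with which $\log|L(\tfrac12+z_1,f\otimes\chi_d)L(\tfrac12+z_2,f\otimes\chi_d)|$ exceeds $V + \mathcal{M}$. The essential device is to let the truncation $x$ depend on the threshold: $x = X^{A/V}$ with $A$ roughly $\log\log\log X$, and then split the prime sum at $z = x^{1/\log\log X}$ into $S_1$ (small primes, bounded by many moments since $z$ is close to $1$) and $S_2$ (large primes up to $x$, handled by a separate moment argument whose admissible $k$ is smaller but sufficient because $S_2$ has small variance). The final moment is then obtained by integrating the tail bound against $e^{kV}$, as in the deduction of Theorem \ref{thm:GRHupperbound} from Proposition \ref{prop:measure}. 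Your proposal, as stated, omits this $V$-dependent truncation and the $S_1$/$S_2$ splitting, and therefore does not close.
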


Let $F$ be a smooth, nonnegative, compactly supported function $\mr_+$.  
Recall from the previous section the definition of ${\mathcal A}_U(\frac 12;8d)$, and 
write $L(\frac 12,f\otimes \chi_{8d}) = {\mathcal A}_U(\frac 12;8d)+ {\mathcal B}_U(\frac 12;8d)$.  
We shall prove that, on GRH, for $U\le X/(\log X)^{100}$ we have 
\begin{equation} 
\label{Aest}
\sumstar_{(d,2)=1} |{\mathcal A}_U(\tfrac 12;{8d})|^2 F\left(\frac{8d}{X}\right)= 
\frac{2X}{\pi^2} \left( {\tilde F}(1) L(1,\text{sym}^2 f)^3 Z_2(0,0) \log U + O(1) \right), 
\end{equation} 
and 
\begin{equation}
\label{Best} 
\sumstar_{(d,2) =1} |{\mathcal B}_U(\tfrac 12; 8d)|^2 F\left(\frac{8d}{X}\right) = O\left( X (\log X)^{\frac 12+\varepsilon} (\log X/U)^2 \right). 
\end{equation}
Once these two estimates are established, Theorem \ref{thm:GRHasymp} will follow upon 
choosing $U=X/(\log X)^{100}$ since 
\begin{multline*} 
\sumstar_{(d,2)=1} |L(\tfrac 12,f\otimes \chi_{8d})|^2 F\left(\frac{8d}{X}\right)=
\sumstar_{(d,2)=1} |{\mathcal A}_U(\tfrac 12;8d)|^2 F\left(\frac{8d}{X}\right) 
\\
+ O\left( \sumstar_{(d,2)=1} \left(|{\mathcal A}_U(\tfrac 12;8d){\mathcal B}_U(\tfrac 12;8d)| +|{\mathcal B}_U(\tfrac 12; 8d)|^2 \right)F\left(\frac{8d}{X}\right)\right),
\end{multline*}
and using \eqref{Aest} and \eqref{Best} together with Cauchy-Schwarz we see that 
the remainder term above is $O(X (\log X)^{\frac 34+\varepsilon})$.

It remains now to prove \eqref{Aest} and \eqref{Best}.  We start with the latter.   From the definition 
of ${\mathcal B}_U$ we find that 
\begin{equation*}
{\mathcal B}_U(\tfrac 12;8d) = \frac{1}{\pi i} \int_{(c)} g(s) L(\tfrac 12+s,f \otimes \chi_{8d}) 
\left( \frac{(8d)^s - U^s}{s} \right) ds. 
\end{equation*} 
Since $((8d)^s-U^s)/s$ is analytic for all $s$ we may move the line of integration 
above to the line Re$(s)=0$, and since $|(8d)^s-U^s|/|s| \ll |\log (8d/U)|$ we find that 
$$ 
|{\mathcal B}_U(\tfrac 12;8d)| \ll |\log (8d/U)| \int_{-\infty}^{\infty} |g(it)| |L(\tfrac 12+it,f\otimes \chi_{8d})| dt. 
$$ 
Therefore we find that the RHS in \eqref{Best} is
$$ 
\ll (\log X/U)^2 \int_{-\infty}^{\infty} \int_{-\infty}^{\infty} |g(it_1)g(it_2)| \sumstar_{(d,2)=1} 
|L(\tfrac 12+it_1,f\otimes \chi_{8d})L(\tfrac 12+it_2,f\otimes \chi_{8d})| dt_1 \, dt_2. 
$$
If $|t_1|$ and $|t_2|$ are both below $X/2$ then we use Corollary \ref{shiftbounds} to bound the 
inner sum over $d$ above.  In the remaining case we use Cauchy's 
inequality and the bound of Corollary \ref{coro:HB}.   Since $|g(t)|$ decreases exponentially 
in $|t|$, an easy calculation then gives \eqref{Best}.  

Now we turn to \eqref{Aest}.   The argument follows the pattern laid out in Sections 
\ref{section:mainprop} and \ref{section:lowerbound}; the results there may be improved by appealing to the estimates of 
Corollary \ref{shiftbounds} in place of the weaker estimate in Corollary \ref{coro:HB}.   Consider 
Proposition \ref{mainprop}; on GRH we claim that the remainder term there 
can be replaced with $O((U_1 U_2)^{\frac 14} X^{\frac 12} (\log X)^{20})$.   
We give only the changes that need to be made to the argument there.   In Section \ref{section:S2}
we use Corollary \ref{shiftbounds} in place of Corollary \ref{coro:HB} (in the range $|t|\le X$)  to estimate the quantity in \eqref{eq:S23}.  
This shows that the bound in Lemma \ref{lem:3.1} may be replaced with $X(\log X)^{10}Y^{-1}$.  
Similarly in Section \ref{section:3.3}, in estimating \eqref{eq:firstbd} and \eqref{eq:secbd} we again invoke Corollary \ref{shiftbounds} 
(in the range $|u| \le X$) to obtain there the improved bound of $(U_1U_2)^{\frac 12} Y (\log X)^5$.  
Choosing $Y=X^{\frac 12}(U_1 U_2)^{-\frac 14}$ as before, we obtain the stated 
bound for the remainder term in Proposition \ref{mainprop}.    The 
argument of \S \ref{section:lowerbound} now goes through verbatim establishing \eqref{Aest}.


\section{Upper bounds for shifted moments assuming GRH}
\label{section:shifted}

\noindent Given a real number $x\ge 10$ and a complex number $z$, set
 \begin{equation*}
\label{eq:L}
 \mathcal{L}(z,x) 
=
	\begin{cases}
	 \log \log{x}, \qquad & |z| \le (\log{x})^{-1}, \\
	 -\log{|z|}, \qquad &(\log{x})^{-1} \le |z| \le 1, \\
	0, \qquad & 1 \le |z|. \\
	\end{cases}
\end{equation*}
If $z_1$ and $z_2$ are complex numbers we define 
\begin{equation*} 
{\mathcal M}(z_1, z_2, x) = -\tfrac 12 \left( {\mathcal L}(z_1,x) +{\mathcal L}(z_2,x)\right), 
\end{equation*}
and 
\begin{multline*}
{\mathcal V}(z_1,z_2,x) = \tfrac 12 \left( {\mathcal L}(2z_1,x) + {\mathcal L}(2z_2,x) 
+ {\mathcal L}(2\text{Re}z_1,x) + {\mathcal L}(2\text{Re}z_2,x) \right. \\
 \left. + 2{\mathcal L}(z_1+z_2,x) + 2 {\mathcal L}(z_1 +\overline{z_2},x)\right).
\end{multline*} 
This section is devoted to establishing, on GRH, the following estimates 
for shifted moments of $L$-functions.  An immediate consequence of 
this theorem is Corollary \ref{shiftbounds} which we used above to establish Theorem \ref{thm:GRHasymp}. 

\begin{mytheo}
\label{thm:GRHupperbound} 
Let $X$ be large, and let $z_1$ and $z_2$ be two complex numbers with 
$0\le \text{\rm Re}(z_1), \text{\rm Re}(z_2) \le 1/\log X$ and with $|z_1|, |z_2| \le X$.  Assume GRH 
for the family of quadratic twists of $f$, and for the Riemann zeta-function, 
and for the symmetric square $L$-function $L(s,\text{\rm sym}^2 f)$.  Then 
for any positive real number $k$ and any $\varepsilon >0$ we have
$$ 
\sumflat_{|d|\le X} |L(\tfrac 12 +z_1, f\otimes \chi_d)L(\tfrac 12+z_2, f\otimes \chi_d)|^k 
\ll_{k,\varepsilon} X (\log X)^{\varepsilon} \exp\big( k{\mathcal M}(z_1,z_2,X) +\tfrac{k^2}{2} {\mathcal V}(z_1,z_2,X)\big). 
$$
\end{mytheo}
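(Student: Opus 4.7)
The proof will follow the method developed by Soundararajan in \cite{SoundMoment}, adapted to the symplectic family of quadratic twists and extended to handle two shifts simultaneously (in the spirit of Chandee's treatment of shifted moments of the Riemann zeta-function \cite{Chandee}). The overall structure is: first establish, on GRH, a uniform upper bound for $\log|L(\tfrac12+z, f\otimes\chi_d)|$ in terms of a short Dirichlet polynomial over primes plus a shift-dependent bias; next exponentiate, multiply the bounds for $z=z_1$ and $z=z_2$, and take $k$-th powers; finally average over $d$ using the large sieve (Theorem \ref{thm:H-B}), where the diagonal contribution from squares produces the Gaussian variance matching $\tfrac{k^2}{2}\mathcal{V}(z_1,z_2,X)$.

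The key first step is to prove, under GRH for $L(s,f\otimes\chi_d)$, $\zeta(s)$ and $L(s,\text{\rm sym}^2 f)$, an inequality of the shape
\begin{equation*}
\log|L(\tfrac12+z, f\otimes\chi_d)| \le \text{Re}\sum_{p\leq x}\frac{\lambda_f(p)\chi_d(p)}{p^{1/2+z+\lambda/\log x}}\frac{\log(x/p)}{\log x} - \tfrac{1}{2}\mathcal{L}(z,X) + \frac{(1+\lambda)\log X}{\log x} + O(1),
\end{equation*}
for an auxiliary $x \ge 3$ and parameter $\lambda>0$. The Dirichlet polynomial over primes comes from Soundararajan's main inequality (bounding $\log|L(\tfrac12+z)|$ by a weighted sum over non-trivial zeros and converting this to primes via the explicit formula). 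The $-\tfrac12\mathcal{L}(z, X)$ bias arises from the prime-square contribution: using $\lambda_f(p)^2 = \lambda_{\text{\rm sym}^2 f}(p)+1$, the $p^{-1-2z}$ sum splits into a piece producing $-\tfrac12\log\zeta(1+2z)$ and a bounded piece from $\log L(1+2z,\text{\rm sym}^2 f)$, and on GRH $-\tfrac12\log|\zeta(1+2z)| = \tfrac12\log|2z|+O(1)$ matches $-\tfrac12\mathcal{L}(z,X) + O(1)$ in the relevant range of $z$.

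Adding this inequality for $z=z_1$ and $z=z_2$, taking exponentials and $k$-th powers, and choosing $x = X^{1/\log\log X}$ so that $k(1+\lambda)\log X/\log x \ll \log\log X$ (absorbed into the $(\log X)^\varepsilon$ in the theorem), reduces the problem to bounding
\begin{equation*}
\sumflat_{|d|\le X}\exp\Bigl(k\,\text{Re}\sum_{p\le x}\frac{\lambda_f(p)\chi_d(p)(p^{-z_1}+p^{-z_2})}{p^{1/2+\lambda/\log x}}\frac{\log(x/p)}{\log x}\Bigr).
\end{equation*}
Expanding the exponential as a truncated Taylor series of length $N$ with $x^N \le X^{1-\varepsilon}$ and applying Theorem \ref{thm:H-B} leaves only the diagonal $n=\square$ contribution, whose combinatorial structure (Wick-type pairings) is exactly that of a Gaussian moment and yields
\begin{equation*}
\ll X\exp\Bigl(\tfrac{k^2}{2}\sum_{p\le x}\frac{\lambda_f(p)^2}{p}\bigl(\text{Re}(p^{-z_1}+p^{-z_2})\bigr)^2 + O(1)\Bigr).
\end{equation*}
Finally, expanding $(\text{Re}\,w)^2 = \tfrac12|w|^2 + \tfrac12\text{Re}(w^2)$ with $w = p^{-z_1}+p^{-z_2}$ produces six prime sums of the form $\sum_p \lambda_f(p)^2 p^{-1-w}$ with $w\in\{2z_1, 2z_2, 2\,\text{Re}\,z_1, 2\,\text{Re}\,z_2, z_1+z_2, z_1+\overline{z_2}\}$ (the last four combined with their conjugates); each equals $\mathcal{L}(w,X)+O(1)$ by Rankin–Selberg plus GRH for $\zeta$ and $L(s,\text{\rm sym}^2 f)$, and they assemble into precisely $\mathcal{V}(z_1,z_2,X)$.

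The principal obstacle is the first step: proving the shifted main inequality uniformly across the three regimes $|z|\le 1/\log X$, $1/\log X\le |z|\le 1$ and $|z|\ge 1$, while tracking the bias $-\tfrac12\mathcal{L}(z,X)$ cleanly. For small $|z|$ the symmetric-square $L$-function at $1+2z$ sits near its pole-free edge and its logarithm, though bounded, must be isolated uniformly from the $\zeta(1+2z)$ singular contribution; the cutoff $|z|\le(\log x)^{-1}$ in the definition of $\mathcal{L}$ reflects precisely the scale at which $\log|\zeta(1+2z)|$ saturates. The large-sieve step is also genuinely tight when $|z_1-z_2|$ is small, so that $\mathcal{V}$ approaches $2\log\log X$; controlling the Taylor-truncation error with the required precision in this regime is the most delicate accounting in the argument.
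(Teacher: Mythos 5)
Your first step (the GRH main inequality bounding $\log|L(\tfrac12+z_1,\cdot)L(\tfrac12+z_2,\cdot)|$ by a prime sum plus ${\mathcal M}(z_1,z_2,X)$ plus $O(\log X/\log x)$) is correct and matches what the paper establishes in \eqref{eq:6.4}, including the mechanism by which the bias $-\tfrac12{\mathcal L}(z,X)$ emerges from the $p^{-1-2z}$ terms using the Hecke relation $\lambda_f(p)^2 = \lambda_f(p^2)+1$ and GRH for $\zeta$ and $L(s,\text{sym}^2 f)$. You have also correctly identified the six terms that assemble into ${\mathcal V}(z_1,z_2,X)$.

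However, the step from there to the moment bound has a genuine gap. You propose to exponentiate, take $k$-th powers, truncate the Taylor series of $\exp(k\,\text{Re}\,S)$ at length $N$ with $x^N \le X^{1-\varepsilon}$, and then sum over $d$. The issue is that the Taylor remainder is only negligible when $k|S| \ll N$, but for some discriminants $S = S(d)$ is as large as $\log X/\log x$ (up to constants). With your choice $x = X^{1/\log\log X}$, that means $S$ can reach roughly $\log\log X$, so you would need $N \gg k\log\log X$; yet the constraint $x^N \le X^{1-\varepsilon}$ caps $N$ at roughly $(1-\varepsilon)\log\log X$. The two requirements are incompatible once $k \ge 1$, and the set of $d$ violating $k|S|\ll N$ is exactly where the moment is concentrated — so the truncation error cannot be discarded. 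This is precisely the obstruction Soundararajan's method is designed to circumvent.

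The paper avoids this by not exponentiating directly. Instead it proves Proposition \ref{prop:measure}, an upper bound on the distribution function ${\mathcal N}(V;z_1,z_2,X) = \#\{d : \log|L(\tfrac12+z_1,\cdot)L(\tfrac12+z_2,\cdot)| \ge V + {\mathcal M}\}$, in three $V$-ranges, and then recovers Theorem \ref{thm:GRHupperbound} from
\[
\sumflat_{|d|\le X}|L(\tfrac12+z_1,f\otimes\chi_d)L(\tfrac12+z_2,f\otimes\chi_d)|^k = k\int_{-\infty}^{\infty} e^{kV+k{\mathcal M}}\,{\mathcal N}(V;z_1,z_2,X)\,dV.
\]
The Proposition is proved by splitting the prime sum in \eqref{eq:6.4} into $S_1$ (primes $\le z$) and $S_2$ (primes in $(z,x]$), applying a Chebyshev-type argument with a high moment of $S_1$ and $S_2$, and choosing the moment exponent $k$ \emph{as a function of the threshold} $V$ — so the moment length constraint $y^k \le X^{1/2}/\log X$ in Lemma \ref{lemma:quadDirichletpol} is always met precisely because large $V$ forces small $k$. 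Your fixed-$N$ expansion does not have this freedom. Note also that Lemma \ref{lemma:quadDirichletpol} is proved using only P\'olya--Vinogradov and the Wick-type pairing count $\frac{(2k)!}{k!2^k}$; Heath-Brown's Theorem \ref{thm:H-B} is not what is needed at this stage. To repair your argument you would need to first separate out the $d$ with $S$ large (bounding their count by exactly the distribution-function estimate), which reduces your approach to the paper's.
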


As remarked earlier this result follows upon modifying the method of \cite{SoundMoment}, and 
similar results for the Riemann zeta-function were obtained by V. Chandee \cite{Chandee}.  
If we set $z_1=z_2 = it$ for a real number $t$, then it is expected that when $t$ is close 
to zero the moments correspond to a family with ``orthogonal" symmetry, while 
for larger $t$ (for example $t=1$) the expected symmetry type is ``unitary."  We note 
that our Theorem above expresses in a uniform way the transition between 
these symmetry types.  

To prove Theorem \ref{thm:GRHupperbound}, we shall establish an estimate on the frequency with which 
large values of $|L(\tfrac 12+z_1,f\otimes \chi_d)L(\tfrac 12+z_2,f\otimes \chi_d)|$ 
are attained.   As $d$ varies over the discriminants of 
size below $X$ we expect that   $|L(\tfrac 12+z_1,f\otimes \chi_d)L(\tfrac 12+z_2,f\otimes \chi_d)|$ is 
distributed normally with mean ${\mathcal M}(z_1,z_2,X)$ and variance ${\mathcal V}(z_1,z_2,X)$. 
The next Proposition establishes (in a range sufficient to prove Theorem \ref{thm:GRHupperbound}) 
an upper bound for the frequency of large values that conforms to 
the above prediction.  In what follows it may be helpful to keep in mind that 
for $z_1$ and $z_2$ as in Theorem \ref{thm:GRHupperbound} the quantity ${\mathcal V}(z_1,z_2,X)$ 
lies between $\log \log X+ O(1)$ and $4\log \log X+O(1)$.

\begin{myprop}
\label{prop:measure}  
With assumptions as in Theorem \ref{thm:GRHupperbound}, let
${\mathcal N}(V; z_1,z_2, X)$ denote the number of fundamental disciminants $|d| \leq X$ 
such that $\log{|L(\tfrac12 + z_1, f \otimes \chi_d)L(\tfrac 12+z_2, f\otimes \chi_d)|} 
\geq V + {\mathcal M}(z_1,z_2,X)$.  
In the range $10 \sqrt{\log \log{X}} \leq V \leq {\mathcal V}(z_1,z_2,X)$ we have
\begin{equation*}
\label{eq:measure1}
{\mathcal N}(V;z_1,z_2,X) \ll X \exp\left(-\frac{V^2}{2{\mathcal V}(z_1,z_2,X)} \left(1 - \frac{25}{\log \log \log {X}}\right)\right);
\end{equation*}
for ${\mathcal V}(z_1,z_2,X) < V \leq \frac{1}{16}{\mathcal V}(z_1,z_2,X) \log \log \log  X$ we have
\begin{equation*}
 {\mathcal N}(V;z_1,z_2,X) \ll X \exp\left(-\frac{V^2}{2{\mathcal V}(z_1, z_2,X)} \left(1 - \frac{15V}{ \mathcal{V}(z_1,z_2,X) \log \log \log {X}}\right)^2\right);
\end{equation*}
finally, for $\frac{1}{16}\mathcal{V}(z_1,z_2,X) \log \log \log X < V$ we have
\begin{equation*}
 {\mathcal N}(V;z_1,z_2,X) \ll X \exp\left(-\frac{1}{1025} V \log{V}\right).
\end{equation*}
\end{myprop}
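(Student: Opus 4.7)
Adapting the method of \cite{SoundMoment} and \cite{Chandee} to the quadratic-twist setting, I plan to establish a pointwise upper bound of the form
\begin{equation*}
\log|L(\tfrac12+z_1,f\otimes\chi_d)L(\tfrac12+z_2,f\otimes\chi_d)| \le \mathcal{M}(z_1,z_2,X) + \mathrm{Re}\,Q(d;z_1,z_2) + \frac{C\log X}{\log x} + O(1),
\end{equation*}
valid under GRH for $L(s,f\otimes\chi_d)$, $\zeta(s)$ and $L(s,\mathrm{sym}^2 f)$, for any $2\le x\le X$ and free parameter $\lambda_0\ge 1$, where
\begin{equation*}
Q(d;z_1,z_2) = \sum_{p\le x}\frac{\lam_f(p)\chi_d(p)}{p^{1/2+\lambda_0/\log x}}(p^{-z_1}+p^{-z_2})\frac{\log(x/p)}{\log x}.
\end{equation*}
The deterministic term $\mathcal{M}$ is to arise from combining the $n=p^2$ prime-power contribution $\tfrac12\sum_j(\log L(1+2z_j,\mathrm{sym}^2 f)-\log\zeta(1+2z_j))$ (via $\lam_f(p^2)=\lam_f(p)^2-1$) with the effect of shifting the line of integration by $\lambda_0/\log x$; the leading $-\tfrac12\log\zeta(1+2z_j)$ produces $-\tfrac12\mathcal L(z_j,X)+O(1)$ in the relevant range, while the symmetric-square piece is uniformly bounded under GRH.

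\textbf{Stage 2 (Moments of the prime sum).} For each positive integer $k$ with $x^{2k}\le X^{1-\varepsilon}$ I plan to show
\begin{equation*}
\sumflat_{|d|\le X}(\mathrm{Re}\,Q(d;z_1,z_2))^{2k} \ll X\cdot\frac{(2k)!}{k!\,2^k}\,\mathcal V(z_1,z_2,X)^k.
\end{equation*}
Writing $\mathrm{Re}\,Q=\tfrac12(Q+\overline Q)$, expanding $(\mathrm{Re}\,Q)^{2k}$ into a $2k$-fold sum over primes, and applying Poisson summation (Lemma \ref{lemma:Poisson}) to the average over $d$, the nonzero Fourier frequencies are negligible since $p_1\cdots p_{2k}\le x^{2k}\le X^{1-\varepsilon}$. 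The zero frequency restricts the prime product to a perfect square; combinatorial pairing then produces the Gaussian moment, with the variance $\mathcal V$ emerging from expanding $|(p^{-z_1}+p^{-z_2})+(p^{-\overline{z_1}}+p^{-\overline{z_2}})|^2$ into six terms and invoking the asymptotic $\sum_{p\le x}\lam_f(p)^2/p^{1+w}=\log\zeta(1+w)+\log L(1+w,\mathrm{sym}^2 f)+O(1)$, which matches each of the six terms to one of the $\mathcal L$-values defining $\mathcal V$.

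\textbf{Stage 3 (Optimization).} Combining Markov's inequality with the previous two stages gives
\begin{equation*}
\mathcal{N}(V;z_1,z_2,X)\ll X\cdot\frac{(2k)!}{k!\,2^k}\cdot\frac{\mathcal V(z_1,z_2,X)^k}{(V-C\log X/\log x-O(1))^{2k}},
\end{equation*}
subject to $x^{2k}\le X^{1-\varepsilon}$. Stirling together with the optimal choice $k\approx V^2/(2\mathcal V)$ yields the Gaussian tail $X\exp(-V^2/(2\mathcal V))$. The three ranges in the Proposition correspond to balancing $x^{2k}\le X^{1-\varepsilon}$ against the size of $V$: for $V\le\mathcal V$, one takes $\log x\asymp\log X/(2k+1)$, keeping the secondary error $C\log X/\log x$ small and yielding the first bound with correction $25/\log\log\log X$; for intermediate $V\le\tfrac{1}{16}\mathcal V\log\log\log X$, the larger required $k$ forces smaller $x$ and hence a larger secondary error, producing the $(1-15V/(\mathcal V\log\log\log X))^2$ correction; for $V$ beyond this, one takes $x\asymp(\log X)^C$ and uses the trivial bound $|\mathrm{Re}\,Q|\ll\sqrt x/\log x$ directly, extracting $\exp(-V\log V/1025)$.

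The main obstacle is establishing Stage 1 with the correct mean $\mathcal{M}(z_1,z_2,X)$, uniformly as $z_j$ ranges over $0\le\mathrm{Re}(z_j)\le 1/\log X$ and $|z_j|\le X$; in particular, the transition at $|z_j|\sim 1/\log X$ between the ``orthogonal'' regime ($\mathcal L(z_j,X)\sim\log\log X$) and the ``unitary'' regime ($\mathcal L(z_j,X)\to 0$) must be handled uniformly, which is precisely where GRH for $\zeta(s)$ and $L(s,\mathrm{sym}^2 f)$ enters, to control the prime-square contribution effectively in all regimes. Stage 2, though combinatorially delicate in producing the six $\mathcal L$-terms exactly, is otherwise a mechanical adaptation of the prime-moment calculation in \cite{SoundMoment}, and the three-range optimization in Stage 3 is parallel to that paper.
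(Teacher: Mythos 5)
Your Stages 1 and 2 match the paper's structure: \eqref{eq:6'}--\eqref{eq:6.4} is the pointwise bound you sketch, and Lemma \ref{lemma:quadDirichletpol} is the moment estimate (the paper uses P\'olya--Vinogradov rather than Poisson, but this is cosmetic). Your variance calculation is correct in outline: expanding $(p^{-z_1}+p^{-\overline{z_1}}+p^{-z_2}+p^{-\overline{z_2}})^2$ against $\sum_p \lambda_f(p)^2/p$ does produce exactly $\mathcal V(z_1,z_2,X)$. One small discrepancy: your Stage 1 error is $O(1)$, but the honest error is $O(\log\log\log X)$, arising from removing the condition $p\nmid d$ in the $p^2$-terms via $\sum_{p|d}1/p \ll \log\log\log X$; this is absorbable since $V \geq 10\sqrt{\log\log X}\gg\log\log\log X$, so it is not fatal.

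The genuine gap is in Stage 3. You propose applying a single moment bound to the whole prime sum $Q$, with the constraint $x^{2k}\le X^{1-\varepsilon}$. But for the first range this fails: taking $k\approx V^2/(2\mathcal V)$ forces $\log x \ll \mathcal V\log X/V^2$, so the secondary error $C\log X/\log x$ is of size $CV^2/\mathcal V$. When $V$ approaches $\mathcal V$ (the top of the first range) this error is $\asymp CV$, a constant multiple of $V$, not $O(V/\log\log\log X)$ as required for the $\bigl(1-25/\log\log\log X\bigr)$ correction. In other words, the single-shot Markov argument cannot reach the claimed Gaussian tail throughout $10\sqrt{\log\log X}\le V\le\mathcal V$. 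The paper's remedy, which you do not mention, is to set $x = X^{A/V}$ with $A=\tfrac12\log\log\log X$ and to split the prime sum at $z = x^{1/\log\log X}$ into $S_1$ (primes $\le z$, carrying essentially all the variance) and $S_2$ (primes in $(z,x]$). The tail piece $S_2$ requires only $k\le V/(2A)$ and is shown to exceed $V/A$ with probability $\ll \exp(-\tfrac{V}{4A}\log V)$, while the main piece $S_1$ — since $z$ is much smaller than $x$ — admits $k$ of size up to $V\log\log X/(2A)$, comfortably exceeding $V_1^2/(2\mathcal V)$ with $V_1 = V(1-6/A)$. Without this split, the constraint binding $k$ and $x$ together cannot be satisfied, and the first bound of the Proposition does not follow.
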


We now show how Theorem \ref{thm:GRHupperbound} may be deduced from Proposition \ref{prop:measure}.  
\begin{proof}[Proof of Theorem \ref{thm:GRHupperbound}]  We have 
\begin{multline*}
\sumflat_{|d|\le X} |L(\tfrac 12+z_1,f\otimes \chi_d)L(\tfrac 12+z_2,f\otimes \chi_d)|^k 
= -\int_{-\infty}^{\infty} \exp({kV}+k{\mathcal M}(z_1,z_2,X)) d{\mathcal N}(V;z_1,z_2,X) 
\\
= k\int_{-\infty}^{\infty} \exp({kV}+k{\mathcal M}(z_1,z_2,X)) {\mathcal N}(V;z_1,z_2,X) dV.
\end{multline*}
Inserting here the bounds for ${\mathcal N}(V;z_1,z_2,X)$ furnished by 
Proposition \ref{prop:measure}  we obtain Theorem \ref{thm:GRHupperbound} with a little calculation.  This calculation 
may be facilitated by using Proposition \ref{prop:measure} in the crude form ${\mathcal N}(V;z_1,z_2, X) 
\ll X (\log X)^{o(1)} \exp(-V^2/(2{\mathcal V}(z_1,z_2,X)))$ for $3\le V\le 4k{\mathcal  V}(z_1,z_2,X)$ 
and that ${\mathcal N}(V,z_1,z_2,X) \ll X (\log X)^{o(1)} \exp(-4k V)$ for larger $V$.  
\end{proof}

It remains now to prove Proposition \ref{prop:measure}.  We first obtain an auxiliary result analogous to the Proposition of \cite{SoundMoment}, namely \eqref{eq:6.4} below.  Write $\lam_f(p) = \alpha_p + \beta_p$ where $\alpha_p \beta_p =1$ and, 
by Deligne's theorem, $|\alpha_p|=|\beta_p|=1$.  
By modifying slightly the proof of the Proposition in \cite{SoundMoment} we obtain that 
for  $z_1$, $z_2$, $d$ as in Theorem \ref{thm:GRHupperbound}, and for any $2\le x\le X$, 
\begin{multline}
\label{eq:6'}
 \log |L(\tfrac12 + z_1, f \otimes \chi_d)L(\tfrac 12+z_2,f\otimes \chi_d)| \leq \text{Re}\Big(\sum_{
 \substack{p^l \leq x \\  l \geq 1}}  \frac{\chi_d(p^l) (\alpha_p^l + \beta_p^l)}{l p^{l(\half + \frac{\lambda_0}{\log{x}})} }(p^{-l z_1}+p^{-l z_2}) \frac{\log(x/p^l)}{\log{x}}\Big) \\
+ 2(1+\lam_0) \frac{\log X}{\log x} + O\left(\frac{1}{\log{x}}\right),
\end{multline}
where $\lam_0 =0.4912 \ldots$ is the unique real number satisfying $e^{-\lam_0} = \lam_0 +\lam_0^2/2$.  As in \cite{SoundMoment}, the terms with $l \geq 3$ give $O(1)$.  
Using that $\alpha_p^2 +\beta_p^2 = \lam_f(p^2) -1$, and that $\sum_{p|d} 1/p \ll \log \log \log X$ the terms with $l=2$ give 
\begin{equation} 
\label{eq:6.3}
 \text{Re} \sum_{p\le \sqrt{x}} \frac{\lam_f(p^2)-1}{2p^{1+2\frac{\lam_0}{\log x} }} 
(p^{-2z_1} + p^{-2z_2}) \frac{\log (x/p^2)}{\log x} + O(\log \log \log X).
\end{equation}
Using the GRH for $L(s,\text{sym}^2 f)$ we may see that 
\begin{equation}
\label{eqn:GRH1}
\sum_{p \le y} (p^{-2z_1} + p^{-2z_2}) \lam_f(p^2) \log p \ll \sqrt{y} (\log Xy)^2,
\end{equation}
and also the sum is trivially $\ll y$.  From these bounds and partial summation we obtain 
that 
$$
\sum_{p\le \sqrt{x}}   \frac{\lam_f(p^2)}{p^{1+2\frac{\lam_0}{\log x} }} 
(p^{-2z_1} + p^{-2z_2}) \frac{\log (x/p^2)}{\log x} = O(\log \log \log X).
$$ 
Similarly RH gives that 
\begin{equation}
\label{eqn:GRH2} 
\sum_{p\le y} (p^{-2z_1}+ p^{-2z_2}) \log p = \frac{y^{1-2z_1}}{1-2z_1} + \frac{y^{1-2z_2}}{1-2z_2} 
+ O\left( \sqrt{y} (\log Xy)^2\right), 
\end{equation}
and once again the sum is also trivially $\ll y$.   Partial summation now shows that 
$$
-\frac 12 \sum_{p\le \sqrt{x}}   \frac{1}{p^{1+2\frac{\lam_0}{\log x} }} 
(p^{-2z_1} + p^{-2z_2}) \frac{\log (x/p^2)}{\log x} = {\mathcal M}(z_1,z_2,x)+ O(\log \log \log X). 
$$ 
Inserting the above estimates into \eqref{eq:6.3} and \eqref{eq:6'}, 
and since ${\mathcal M}(z_1,z_2,x) 
\le {\mathcal M}(z_1,z_2,X) + \log X/\log x$, we conclude that 
\begin{multline} 
\label{eq:6.4}
\log |L(\tfrac 12+z_1,f\otimes \chi_d) L(\tfrac 12+z_2, f\otimes \chi_d)| 
\le \text{Re} \sum_{2<p\le x} \frac{\lam_f(p)\chi_d(p)}{p^{\frac 12 +\frac {\lam_0}{\log x}}} 
(p^{-z_1} + p^{-z_2} ) \frac{\log x/p}{\log x} \\
+ {\mathcal M}(z_1,z_2,X) + 4 \frac{\log X}{\log x} + O( \log \log \log X). 
\end{multline} 




\begin{mylemma}
\label{lemma:quadDirichletpol} Let $X$ and $y$ be real numbers and $k$ a natural 
number with $y^k \leq X^{1/2}/\log{X}$.   For any complex numbers $a(p)$ we have
\begin{equation*}
 \sumflat_{|d| \leq X} \Big| \sum_{2<p \leq y} \frac{a(p) \chi_d(p)}{p^{\frac12}} \Big|^{2k} \ll 
X \frac{(2k)!}{k! 2^k} \left(\sum_{p \leq y} \frac{|a(p)|^2}{p} \right)^k,
\end{equation*}
where the implied constant is absolute.
\end{mylemma}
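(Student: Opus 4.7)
The plan is to expand $|S|^{2k}=(S\overline{S})^k$, interchange the order of summation so that the inner sum becomes a character sum over $d$, and then split into diagonal (square) and off-diagonal (non-square) contributions. Writing $S=\sum_{2<p\le y} a(p)\chi_d(p)/\sqrt{p}$, the expansion gives
\begin{equation*}
|S|^{2k}=\sum_{p_1,\ldots,p_k}\sum_{q_1,\ldots,q_k}\frac{\prod_i a(p_i)\prod_j\overline{a(q_j)}}{\sqrt{p_1\cdots p_k\, q_1\cdots q_k}}\,\chi_d(N),
\end{equation*}
where $N:=p_1\cdots p_k\, q_1\cdots q_k$. After moving $\sumflat_{|d|\le X}$ inside we must estimate $\sumflat_{|d|\le X}\chi_d(N)$ depending on whether $N$ is a perfect square.

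For the diagonal $N=\square$, we use the trivial bound $|\sumflat_{|d|\le X}\chi_d(N)|\ll X$ with absolute implied constant. For the coefficient sum, I would apply the standard pairing (overcounting) argument: any $2k$-tuple of primes with $N$ a square arises from at least one perfect matching of the $2k$ slots into $k$ pairs of equal primes. There are exactly $(2k)!/(k!\,2^k)$ such matchings, and each matching contributes $\prod_{j=1}^{k}\sum_{p\le y}|a(p)|^2/p$. Hence the diagonal piece is at most
\begin{equation*}
\frac{(2k)!}{k!\,2^k}\Big(\sum_{p\le y}\frac{|a(p)|^2}{p}\Big)^k\cdot O(X),
\end{equation*}
which already matches the right-hand side of the claim.

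For the off-diagonal $N\ne\square$, I would write $N=m^2n$ with $n>1$ squarefree, so $\chi_d(N)$ differs from $\chi_d(n)$ only by the indicator $\mathbf{1}_{(d,m)=1}$. By quadratic reciprocity the map $d\mapsto\chi_d(n)$ is (up to elementary sign factors depending on $d \bmod 8$) a non-principal Dirichlet character of conductor $O(n)$, so the P\'olya--Vinogradov inequality yields $|\sumflat_{|d|\le X}\chi_d(N)|\ll\sqrt{n}\,\log n\ll\sqrt{N}\,\log N$ with an absolute constant. The total off-diagonal contribution is therefore at most
\begin{equation*}
\sum_{\vec p,\vec q}\frac{\prod|a(p_i)||a(q_j)|}{\sqrt{N}}\cdot\sqrt{N}\,\log N\ll k(\log y)\Big(\sum_{p\le y}|a(p)|\Big)^{2k}.
\end{equation*}
By Cauchy--Schwarz $(\sum_{p\le y}|a(p)|)^2\le\pi(y)\sum_{p\le y}|a(p)|^2\ll y^2\sum_{p\le y}|a(p)|^2/p$, so this is $\ll k(\log y)\,y^{2k}\big(\sum_{p\le y}|a(p)|^2/p\big)^k$. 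The hypothesis $y^k\le X^{1/2}/\log X$ gives $y^{2k}\log y\ll X/\log X$, so the off-diagonal error is absorbed into the diagonal main term with an absolute implied constant (checking the dependence on $k$ is routine since $(2k)!/(k!\,2^k)$ grows faster than any polynomial in $k$).

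The main technical nuisance will be executing the P\'olya--Vinogradov reduction cleanly for fundamental discriminants: one must dispose of the indicator $\mathbf{1}_{(d,m)=1}$ by a short M\"obius inversion and handle the reciprocity sign factors coming from $d\bmod 8$, all while retaining absolute constants. The combinatorics of the diagonal is routine, and the calibration $y^k\le X^{1/2}/\log X$ is exactly what is needed so that the off-diagonal is dominated by the diagonal term without any $X^\varepsilon$ slack.
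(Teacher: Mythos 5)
Your proposal follows the same overall plan as the paper: expand the $2k$-th power, handle the diagonal $p_1\cdots p_{2k}=\square$ via the matching count $(2k)!/(k!\,2^k)$, and absorb the off-diagonal by P\'olya--Vinogradov plus Cauchy--Schwarz, calibrated by $y^k\le X^{1/2}/\log X$. The one thing you miss is the paper's simplifying first step, which makes the ``technical nuisance'' you flag vanish entirely. Since $|S|^{2k}$ is non-negative, one may simply enlarge the sum from fundamental discriminants to \emph{all} integers $|d|\le X$:
\begin{equation*}
\sumflat_{|d|\le X}\Big|\sum_{2<p\le y}\frac{a(p)\chi_d(p)}{\sqrt p}\Big|^{2k}
\le \sum_{|d|\le X}\Big|\sum_{2<p_1,\dots,p_k\le y}\frac{a(p_1)\cdots a(p_k)}{\sqrt{p_1\cdots p_k}}\leg{d}{p_1\cdots p_k}\Big|^{2}.
\end{equation*}
Since every $p_i$ is odd, $d\mapsto\leg{d}{p_1\cdots p_{2k}}$ is \emph{already} a real Dirichlet character to the modulus $p_1\cdots p_{2k}\le y^{2k}$, non-principal precisely when $p_1\cdots p_{2k}$ is not a perfect square; no quadratic reciprocity, no sign factors from $d\bmod 8$, and no M\"obius sieve for $\mathbf 1_{(d,m)=1}$ are needed. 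P\'olya--Vinogradov applies directly with an absolute constant. Your route via reciprocity and sieving would also work but is substantially more cumbersome; you correctly sense this is the delicate point, but it is an avoidable one, and the enlargement trick is precisely what keeps the argument clean and the implied constant absolute.
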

\begin{proof} Expanding out and using the P{\' o}lya-Vinogradov 
inequality, we have 
\begin{multline*}
 \sumflat_{|d| \leq X} \Big| \sum_{2<p \leq y} \frac{a(p) \chi_d(p)}{p^{\frac12}} \Big|^{2k}
\leq
\sum_{|d| \leq X} \Big| \sum_{2<p_1, \dots, p_k \leq y} \frac{a(p_1) \dots a(p_k) \ }{\sqrt{p_1 \dots p_k}} 
\leg{d}{p_1\cdots p_k} \Big|^2
\\
\ll
X \sum_{\substack{p_1, \dots, p_{2k} \leq y \\ p_1 \dots p_{2k} = \square}} \frac{|a(p_1) \dots a(p_{2k}) |}{\sqrt{p_1 \dots p_{2k}}} + O\left(\sum_{p_1, \dots, p_{2k}} |a(p_1) \dots a(p_{2k}) \log({y^{2k}})| \right).
\end{multline*}
Since $y^{2k} \log(y^{2k}) \ll X$ we see, using Cauchy-Schwarz, that the second 
term above is 
\begin{equation*}
 \ll \log(y^{2k}) \left(\sum_{p \leq y} |a(p)|\right)^{2k}  \leq \log(y^{2k})  \left(\sum_{p \leq y} \frac{|a(p)|^2}{p} \right)^k \left( \sum_{p \leq y} p \right)^k \leq X \left(\sum_{p \leq y} \frac{|a(p)|^2}{p} \right)^k.
\end{equation*}
To estimate the first term, note that $p_1 \dots p_{2k} = \square$ precisely 
when there is  a way to pair up the indices so that the corresponding primes are equal.   
There are $\frac{(2k)!}{k!2^k}$ ways in which 
the $2k$ indices may be paired up.  Hence
\begin{equation*}
 \sum_{\substack{p_1, \dots, p_{2k} \leq y \\ p_1 \dots p_{2k} = \square}} \frac{|a(p_1) \dots a(p_{2k}) |}{\sqrt{p_1 \dots p_{2k}}} \leq \frac{(2k)!}{k! 2^k} \left(\sum_{p} \frac{|a(p)|^2}{p} \right)^k. \qedhere
\end{equation*}
\end{proof}

\begin{proof} [Proof of Proposition \ref{prop:measure}]
For brevity put ${\mathcal V}={\mathcal V}(z_1,z_2,X)$, and set
\begin{equation*}
 A =
	\begin{cases}
	  \half \log \log \log X, \qquad &V \leq  {\mathcal V}, \\
	  \frac{{\mathcal V}}{2V} \log\log \log X, \qquad &  {\mathcal V}< V \leq \frac{1}{16} {\mathcal V} \log\log \log X, \\
	  8, \qquad &V > \frac{1}{16} {\mathcal V} \log \log \log X.
	\end{cases}
\end{equation*}
Define further $x = X^{A/V}$, and $z = x^{1/\log \log{X}}$.  

By taking $x =\log X$ in 
\eqref{eq:6.4} and bounding the sum over $p$ trivially, we may assume $V \leq \frac{5\log{X}}{\log \log{X }}$.  Then by \eqref{eq:6.4} we have
\begin{equation*}
\log |L(\tfrac12 + z_1, f \otimes \chi_d)L(\tfrac 12+z_2,f\otimes \chi_d)| \leq S_1 + S_2 +\mathcal{M}(z_1,z_2,X) + 5\frac{V}{A},
\end{equation*}
where $S_1$ is the sum there truncated to $p \leq z$, and $S_2$ is the sum over $z < p \leq x$.  If $d$ is such that $\log |L(\tfrac12 + z_1, f \otimes \chi_d)L(\tfrac 12+z_2,f\otimes \chi_d)| \geq V +  \mathcal{M}(z_1,z_2,X)$, then either
\begin{equation*}
 S_2 \geq \frac{V}{A}, \qquad \text{or} \qquad S_1 \geq V\left(1- \frac{6}{A}\right) =: V_1.
\end{equation*}

By Lemma \ref{lemma:quadDirichletpol} we see that for any $k \leq \frac{V}{2A} -1$ we have
\begin{equation*}
 \sumflat_{|d| \leq X} |S_2|^{2k} \ll X \frac{(2k)!}{k!2^k} \left(\sum_{z < p \leq x} \frac{4}{p}\right)^k  \ll 
 X(3k\log \log \log X)^k.
\end{equation*}
Hence, choosing $k=\lfloor V/(2A) \rfloor -1$ and with 
a little calculation, the number of discriminants $|d|\le X$ with $S_2 \geq V/A$ is
\begin{equation*}
 \ll X \exp\left(-\frac{V}{4A} \log{V}\right).
\end{equation*}

We now seek a bound for the number of discriminants with $S_1$ large.  By Lemma \ref{lemma:quadDirichletpol}, 
we find that for  any $k \leq \log(X^{1/2}/\log{X})/\log{z}$,
\begin{equation*}
 \sumflat_{|d| \leq X} |S_1|^{2k} \ll X \frac{(2k)!}{k! 2^k} \left( \sum_{p \leq z} \frac{a(p)^2}{p}\right)^k,
\end{equation*}
where $a(p) =  \lam_f(p) p^{-\lam_0/\log x} \frac{\log x/p}{\log x} 
\text{Re} (p^{-z_1}+p^{-z_2})$.  
Note that
\begin{equation*}
 \sum_{p \leq z} \frac{a(p)^2}{p} \le 
\frac 14 \sum_{p \leq X} \frac{\lambda_f(p)^2}{p} (p^{-z_1} +p^{-\overline{z_1}} +p^{-z_2} +
p^{-\overline{z_2}} )^2 = {\mathcal V}(z_1,z_2,X) +O(\log \log \log X), 
\end{equation*}
upon using \eqref{eqn:GRH1} and \eqref{eqn:GRH2} and partial summation.  
Thus we get that the number of $|d| \leq X$ such that $S_1 \geq V_1$ is
\begin{eqnarray*}
 &\ll&X V_1^{-2k} \frac{(2k)!}{2^k k!} ({\mathcal V}(z_1,z_2,X)+ O(\log \log \log X))^k 
 \\
 &\ll& X \left(\frac{2k( \mathcal{V}(z_1,z_2,X) + O(\log \log \log X))}{e V_1^2} \right)^k.
\end{eqnarray*}
When $V \leq (\log \log{X})^2$, we take $k$ to be $\lfloor V_1^2/(2{\mathcal V}(z_1,z_2,X)) \rfloor$, 
and for $V > (\log \log{X})^2$  we take $k$ to be $\lfloor 10V \rfloor$.   Then 
the above estimates give that the number of discriminants $|d|\le X$ 
with $S_1 \ge V_1$ is 
\begin{equation*}
 \ll X \exp\left(-\frac{V_1^2}{2{\mathcal V}(z_1,z_2,X)} \left(1 + O\left(\frac{\log \log \log X}{\log \log{X}}\right)\right)\right) + X \exp(-V \log{V}).
\end{equation*}
Combining this estimate with our estimate for the frequency with 
which $S_2$ can be large, we obtain the Proposition. 
\end{proof}

\end{document}